\renewcommand{\epsilon}{\varepsilon}
\title[]{On polarization types of Lagrangian fibrations}
\author{Benjamin Wieneck}
\address{Benjamin Wieneck\\ Institut für Algebraische Geometrie \\Leibniz Universit\"at Hannover\\Welfengarten 1\\30167 Hannover\\Germany}
\email{wieneck@math.uni-hannover.de}
\begin{document}

\thanks{Mathematics Subject Classification 32J27, 14D06, 32Q15, 53D12, 53C26, 32G13, 14D20}
\begin{abstract} The generic fiber of a Lagrangian fibration on an irreducible holomorphic symplectic manifold
is an abelian variety. Associate a polarization type to such Lagrangian fibrations coming from
polarizations on a generic fiber. We prove that this polarization type is constant in families of
Lagrangian fibrations. Further, we determine the polarization type of $\kdrei$--type fibrations and
conjecture that the polarization type should only depend on the deformation type of the total space. 
\end{abstract}

\maketitle
\tableofcontents
\section{Introduction} \label{intro}

The geometry of irreducible holomorphic symplectic manifolds or compact hyperkähler manifolds seems to be quite rigid since very few deformation types are known. The only possible nontrivial fibrations such manifolds can admit are Lagrangian fibrations as D. Matsushita showed, see Theorem \ref{mats}. Lagrangian fibrations help us to understand the geometry of irreducible holomorphic symplectic manifolds. It is hoped that Lagrangian fibrations will be useful for the classification of irreducible holomorphic symplectic manifolds, see \cite{sawonab}.
 \\ 

Let $f : X \rightarrow B$ be a Lagrangian fibration. It is well known that all smooth fibers are abelian varieties even if $X$ is not projective. Given a smooth fiber $F$ an immediate question is to ask for polarizations on it which is by definition the first Chern class $H = c_1(L)$ of an ample line bundle $L$ of $F$. We want to consider types of such polarizations on $F$.

The classical notion of polarizations on abelian varieties and their types can be found in the book \cite{BL} of H. Lange and C. Birkenhake, but we also recall the notion in section \ref{poltypes}.

By an observation of C. Voisin, see Proposition \ref{camp}, it is known that for each smooth fiber $F$ one can find a Kähler class $\omega$ on $X$ such that the restriction $\omega|_F$ is integral and primitive and hence defines a polarization on $F$. An ad--hoc definition of the polarization type of a Lagrangian fibration would be to set $\pol(f) \coloneqq \pol(\omega|_F)$, where the latter is the polarization type of the polarization on $F$ given by $\omega|_F$. Indeed, this does not depend on the chosen smooth fiber $F$ and the chosen $\omega$, see Proposition \ref{polcon} and Proposition \ref{specialpol}. 

The proof of the main result Theorem \ref{maini} below, involves moduli theory of \emph{$\kdrei$--type fibrations} \ie Lagrangian fibrations $f \colon X \rightarrow \IP^n$ such that $X$ is of \emph{$\kdrei$--type}.  

\begin{theo} \label{maini} Let $f \colon X \rightarrow B$ be a Lagrangian fibration with $\dim X = 2n$. Then we associate to $f$ a tuple $\pol(f) \in \IZ^n$ of positive integers called the \emph{polarization type} such that the following holds. \begin{enumerate}
\item \emph{Theorem \ref{samedef}} The polarization type is a deformation invariant of the fibration \ie if $f' \colon X' \rightarrow B'$ is a Lagrangian fibration deformation equivalent to $f$\footnote{This means we have an $S$--morphism $\phi \colon \kx \rightarrow P$ such that $S$ is a connected complex space with finitely many irreducible components, $\kX \rightarrow S$ is a family of irreducible holomorphic symplectic manifolds and $P \rightarrow S$ is a family of projective varieties such that $\phi_t \coloneqq \phi|_{\kX_t} \colon \kX_t \rightarrow P_t$ is a Lagrangian fibration for all $t \in S$ and there are points $t_i \in S$, $i = 1,2$, such that $f = \phi_{t_1}$ and $f' = \phi_{t_2}$.} then $\pol(f) = \pol(f')$.
\item \emph{Theorem \ref{main}} If $X$ is of $\kdrei$--type, then $\pol(f)$ is principal \ie $\pol(f)= (1, \ldots, 1)$.
\end{enumerate}
\end{theo} 

Ultimately we expect the polarization type $\pol(f)$ only to depend on the deformation type of the total space $X$ of $f \colon X \rightarrow B$. But this is possibly a too optimistic conjecture. \\

In section \ref{modulioflagrangian} the moduli theory of Lagrangian fibrations is explained which is mostly a recollection of known facts for the convenience of the reader. In the case of $\kdrei$--type fibrations this relies on methods developed by E. Markman in \cite{eyal1} and \cite{eyal2}. Besides that two results of D. Matsushita \cite{Mat09}, \cite{matiso} play an important role. The former one, see Theorem \ref{matsh}, states that every Lagrangian fibration can be considered as a member of a family of Lagrangian fibrations parametrized by a small representative of deformation space $\Def(X,L)$ of the pair $(X,L)$, where $L$ is the pullback of an ample line bundle on the base space. Using this theory we describe how to obtain a connected component of the moduli space of $\kdrei$--type fibrations. 

In section \ref{poltypes} we construct the polarization of a Lagrangian fibration, prove the basic properties and the first part of Theorem \ref{maini}. In particular a relation between the geometry of the moduli of Lagrangian fibrations and the polarization type is given, compare for instance Propostion \ref{defo} and Theorem \ref{same}. \\

Section \ref{beauvillemukai} recalls the notion of Beauville--Mukai systems which are examples of $\kdrei$--type fibrations. Further it is shown in Corollarly \ref{prin} that their polarization type is principal \ie given by $(1, \ldots, 1) \in \IZ^n$ which follows from a work from C. Ciliberto and G. van der Geer \cite{cili} which is needed for the proof of Theorem $\ref{main}$ (ii). \\

In section \ref{kdrei} we prove Theorem \ref{maini} (ii) \ie that the polarization types of $\kdrei$--fibrations is principal. The rough idea of the proof is the following. Every connected component of the moduli of $\kdrei$-type fibrations contains a Beauville--Mukai system which was found in \cite{eyal2}. Then each two fibrations in such a component are deformation equivalent as fibrations hence the polarization type must be principal by Theorem \ref{maini} (i) since the polarization types of Beauville--Mukai systems are principal. A key tool for seeing if two Lagrangian fibrations belong to the same connected component of the moduli is a monodromy invariant function, see Lemma \ref{invariant} which was introduced in \cite{eyal2}. \\

\textbf{Acknowledgements:} I thank my advisor Klaus Hulek, Eyal Markman, Christian Lehn and Malek Joumaah for helpful discussions. In particular Christian Lehn for also introducing me to this problem. Especially I want to thank Eyal Markman for hospitality at the University of Massachusetts in Amherst and explaining me so much about the moduli of Lagrangian fibrations. I also thank the referee, who read this work carefully and pointed out several mistakes and improvements. I gratefully acknowledge the support of the DFG Graduiertenkolleg
1463 ``Analysis, Geometry and String Theory''.

\section{Irreducible symplectic manifolds and their fibrations}

In this section we recall the basic facts about irreducible holomorphic symplectic manifolds and their fibrations.

\begin{defin}A compact Kähler manifold $X$ is called \emph{hyperkähler} or \emph{irreducible holomorphic symplectic} if $X$ is simply connected and $H^0(X, \Omega_X^2)$ is generated by a nowhere degenerate holomorphic two--form $\sigma$. \end{defin} Note that $\sigma$ is automatically symplectic since every holomorphic form on a compact Kähler manifold is closed.

The irreducible holomorphic symplectic manifolds of dimension two are precisely the K$3$ surfaces. The most basic higher dimensional example is provided by the Douady space $S^{[n]}$ of $n$ points for a K$3$ surface $S$ which parametrizes zero--dimensional subspaces of $S$ of length $l(Z) \coloneqq \dim_{\CC} \ko_Z(Z) = n$. A. Beauville \cite{Bea84} showed that $S^{[n]}$ is an irreducible holomorphic symplectic manifold of dimension $2n$. An irreducible holomorphic symplectic manifold is called of \emph{$\kdrei$--type} or \emph{$\kdrei$--type manifold} if it is deformation equivalent to $S^{[n]}$ for a K$3$ surface $S$. \\

The second cohomology $H^2(X,\IZ)$ of any irreducible holomorphic symplectic manifold $X$ admits the well known \emph{Beauville--Bogomolov--Fujiki} quadratic form $q_X$ which is non--degenerate and of signature $(3, b_2(X) - 3)$, see \cite[23.3]{huy1}. The associated bilinear form is denoted by $(\cdot, \cdot)$. On an abstract lattice we also denote the bilinear form by $(\cdot, \cdot)$. The lattice $H^2(X,\IZ)$ with the Beauville--Bogomolov--Fujiki form is invariant under deformations of the manifold $X$. For manifolds of $\kdrei$--type, $n \neq 1$, this lattice is isometric to the \emph{$\kdrei$--type lattice}
$$ E_8(-1)^{\oplus 2} \oplus U^{\oplus 3} \oplus \left\langle 2-2n \right\rangle \, ,$$
see \cite[Prop. 6]{Bea84} where $\left\langle 2-2n\right\rangle$ denotes the lattice of rank one with generator $l$ such that $(l,l) = 2-2n$, $E_8(-1)$ the negative definite root lattice of type $E_8$ and $U$ the unimodular rank two hyperbolic lattice.

If $\Lambda$ is a lattice isometric to the second cohomology of an irreducible holomorphic symplectic manifold $X$, then  a \emph{marking} on $X$ is the choice of an isometry $\eta : H^2(X,\IZ) \rightarrow \Lambda$. The tuple $(X,\eta)$ is then called a \emph{marked pair} or a \emph{marked irreducible holomorphic symplectic manifold}. \\

If $X$ is a fixed irreducible holomorphic symplectic manifold set $\Lambda \coloneqq H^2(X,\IZ)$ and consider the Kuranishi family $\pi : \gX \rightarrow \Def(X)$ with $\gX_0 \coloneqq \pi^{-1}(0) = X$. We will view the base $\Def(X)$ sometimes as a germ but also as a representative which we choose small enough \ie simply connected and that all fibers are irreducible holomorphic symplectic. Then by Ehresmann's theorem we can choose a trivialization $\Sigma : R^2 \pi_{\star}\IZ \rightarrow \Lambda$ also called a \emph{marking}. Then define the \emph{local period map} by
$$ \kp : \Def(X) \longrightarrow \IP(\Lambda_{\CC}) \, , \ \ t \longmapsto [\Sigma_t(H^{2,0}(\gX_t))]$$
where $\Lambda_{\CC} \coloneqq \Lambda \otimes \CC$ and $\Sigma_t : (R^2 \pi_{\star}\IZ)_t \otimes_{\IZ} \CC \rightarrow \Lambda_{\CC}$ is the induced map of stalks. It takes values in the \emph{period domain of type $\Lambda$} \cite[22.3, 25.2]{huy1} namely
$$\Omega_{\Lambda} \ \coloneqq \ \left\{ p \in \IP(\Lambda_{\CC}) \ | \ (p,p) = 0 \text{ and } (p, \bar{p}) > 0 \right\} $$
which is connected since the signature of $q_X$ is $(3, \rk \Lambda - 3)$, see \cite[Thm. 5]{Bea84}.
\begin{theo}[\textsc{Local Torelli},  \cite{Bea84}, Thm. 5] If $\Def(X)$ is chosen small enough, the period map $\kp : \Def(X) \rightarrow \Omega_{\Lambda}$ is an open embedding. \end{theo}
Two marked pairs $(X_i, \eta_i)$, $i=1,2$, are called \emph{isomorphic} if there is an isomorphism $f : X_1 \rightarrow X_2$ such that $\eta_2 = \eta_1 \circ f^{\star}$.
There exists a \emph{moduli space of marked pairs} $\gM_{\Lambda} \coloneqq \left\{ (X,\eta) \text{ marked pair } \right\}/\cong$ which can be constructed by glueing all deformation spaces $\Def(X)$ of irreducible holomorphic symplectic manifolds $X$ with $H^2(X,\IZ)$ isometric to $\Lambda$. This gives a non--Hausdorff complex manifold of dimension $\rk \Lambda - 2$. The \emph{global period map} is  
$$ \kp \ : \ \gM_{\Lambda} \longrightarrow \Omega_{\Lambda} \, , \ \ \ (X,\eta) \longmapsto [\eta(H^{2,0}(X))] $$ 
and is a local biholomorphism by the Local Torelli. If one takes an arbitrary connected component $\gM^{\circ}_{\Lambda}$ of $\gM_{\Lambda}$ then by a result of D. Huybrechts \cite[Prop. 25.12]{huy1} the restriction $\kp : \gM^{\circ}_{\Lambda} \rightarrow \Omega_{\Lambda}$ is surjective. 

If $L$ denotes a line bundle on $X$ by abuse of notation we also denote the universal family of the pair $(X,L)$ by $\pi : \gX_L \rightarrow \Def(X,L)$ which comes with a universal line bundle $\kl$ on $\gX_L$ such that $(\gX_L)_0 = X$ and $\kl_0 = L$, see \cite[Cor. 1]{Bea84}, where $\kl_t := \kl|_{\gX_t}$ and $0 \in \Def(X)$ is a reference point. We consider again $\Def(X,L)$ as a germ but as well as a small open representative. A representative of $\Def(X,L)$ is locally given by $(c_1(L), \cdot) = 0$ in $\Omega_{\Lambda}$ hence it is a smooth hypersurface in $\Def(X)$, see \cite[26.1]{huy1} and one defines $\gX_L$ as the preimage of it under $\pi$. The family $\pi : \gX_L \rightarrow \Def(X,L)$ is the restriction of the Kuranishi family $\pi : \gX \rightarrow \Def(X)$ to $\gX_L$ and $\Def(X,L)$. \\

For completeness we give the following definitions.
\begin{defin} Let $X_i$, $i = 1,2$, be two irreducible holomorphic symplectic manifolds. An isomorphism $P : H^2(X_1, \IZ) \rightarrow H^2(X_2, \IZ)$ is called a \emph{parallel transport operator} if there exists a family $\pi : \kx \rightarrow S$ of irreducible holomorphic symplectic manifolds, points $t_i$ such that $\kx_{t_i} = X_i$ and a continuous path $\gamma$ such that the parallel transport $P_{\gamma}$ along $\gamma$ in the local system $R^2 \pi_{\star} \IZ$ coincides with $P$. For $X \coloneqq X_1 = X_2$ it is also called a \emph{monodromy operator} and the subgroup $\Mon^2(X)$ of $O(H^2(X,\IZ))$ generated by monodromy operators is called the \emph{monodromy group}. \end{defin}

Due to D. Matsushita much is known about nontrivial fiber structures on irreducible holomorphic symplectic manifolds.
\begin{theo}[\textsc{Matsushita}, \cite{Mat99}, \cite{Mat00}, \cite{Mat01}, \cite{Mat03}]\label{mats} Let $f : X \rightarrow B$ be a surjective holomorphic map with connected fibers from an irreducible holomorphic symplectic manifold $X$ of dimension $2n$ to a normal complex space $B$ such that $0 < \dim B < 2n$. Then the following statements hold.
\begin{enumerate} 
\item $B$ is projective of dimension $n$ and its Picard number is $\rho(B) = 1$.
\item For all $t \in B$ the fiber $X_t \coloneqq f^{-1}(t)$ is Lagrangian subspace \ie $\sigma|_{X^{\text{\emph{reg}}}_t} = 0$ where $X^{\text{\emph{reg}}}_t$ denotes the smooth part of $X_t$.
	\item If $X_t$ is smooth then it is a projective complex torus \ie an abelian variety. \end{enumerate}
	\end{theo} 
Such a fibration $f : X \rightarrow B$ as in the Theorem is called a \emph{Lagrangian fibration}. If $X$ is a $\kdrei$--type manifold then we call $f : X \rightarrow B$ a \emph{$\kdrei$--type fibration}. \\

If the base of the Lagrangian fibration is smooth even more is known due to a deep result of J.-M. Hwang which was recently slighty generalized by C. Lehn and D. Greb to the non--projective case.
\begin{theo}[\textsc{Hwang}, \cite{Hwa08}, \cite{LeGr13}] Let $f : X \rightarrow B$ be a Lagrangian fibration such that $B$ is smooth and $\dim X = 2n$. Then $B \cong \IP^n$. \end{theo} If $f : X \rightarrow B$ is a $\kdrei$--type fibration then E. Markman \cite[Thm. 1.3, Rem. 1.8]{eyal1} in combination with a result of D. Matsushita \cite[Thm. 1.2, Cor. 1.1]{matiso} has shown that $B \cong \IP^n$ without assuming smoothness of $B$. By \cite[Appendix]{yoshibase} also in combination with \cite[Thm. 1.2, Cor. 1.1]{matiso} this holds for Lagrangian fibrations on irreducible holomorphic symplectic manifolds deformation equivalent to generalized Kummer manifolds. Generalized Kummer manifolds are another class of irreducible holomorphic symplectic manifolds introduced by A. Beauville \cite{Bea84}. The general conjecture is that the base is always the projective space. \\

The basic example of a Lagrangian fibration is provided by the Douady space of an elliptic K$3$ surface $f : S \rightarrow \IP^1$.  Then one uses the \emph{Douady--Barlet map}
$$ \rho \ : \ S^{[n]} \longrightarrow S^{(n)}\, , \ \ \ Z \longmapsto \sum\limits_{z \in Z} (\dim_{\CC} \ko_{Z,z}) z $$
which is a resolution of singularities of the \emph{$n$--th symmetric product} $S^{(n)} \coloneqq (S \times \cdots \times S)/\Sigma_n$ to obtain a morphism 
$$ S^{[n]} \stackrel{\rho}{\longrightarrow} S^{(n)} \stackrel{f \times \cdots \times f}{\longrightarrow} (\IP^1)^{(n)} \cong \IP^n \,$$
which is a Lagrangian fibration and the smooth fibers are given by products of elliptic curves which are the smooth fibers of $f : S \rightarrow \IP^1$. Note that two--dimensional Lagrangian fibrations are exactly the elliptic K$3$ surfaces.

\section{Moduli space of Lagrangian fibrations}\label{modulioflagrangian}

This section has the purpose to explain what we mean by the \emph{moduli space of Lagrangian fibrations}. For the $\kdrei$--type we describe connected components of it. Many of the constructions and explanations can be found in \cite{eyal1}, \cite{eyal2} and \cite{matiso}. 

D. Matsushita \cite{Mat09} constructed a \emph{local moduli space} for arbitrary Lagrangian fibrations.

\begin{defin}\label{deflf} \begin{enumerate} \item A \emph{family of Lagrangian fibrations} over a connected complex space $S$ with finitely many irreducible components is an $S$--morphism 
$$   \xymatrix{  \kX \ar[rr]^{\phi} \ar[dr] & & P \ar[dl] \\ 
										&	S & 		   } $$
where $\kX \rightarrow S$ is a family of irreducible holomorphic symplectic manifolds and $P \rightarrow S$ is a family of projective varieties such that for every $s \in S$ the restriction $\phi|_{\kX_s} : \kX_s \rightarrow P_s$ to the irreducible homorphic symplectic manifold $\kX_s$ is a Lagrangian fibration. 
\item Two Lagrangian fibrations $f_1$ and $f_2$ are \emph{deformation equivalent} if there is a family of Lagrangian fibrations over a connected complex space $S$ containing $f_1$ and $f_2$.
\end{enumerate}
\end{defin}

Let $\pi : \gX \rightarrow \Def(X)$ denote the Kuranishi family of an irreducible holomorphic symplectic manifold $X = \pi^{-1}(0)$. For a line bundle $L$ on $X$ let $\pi : \gX_L \rightarrow \Def(X,L)$ denote the universal family of the pair $(X,L)$. Further denote by $\kl$ the universal line bundle on $\gX_L$ and set $\kl_t \coloneqq \kl|_{\gX_{L,t}}$. 
\begin{theo}{\cite[Cor. 1.3]{Mat09}}\label{matsh} Let $f : X \rightarrow B$ be a Lagrangian fibration and $L$ be the pullback of a very ample line bundle on $B$. Then $\kl$ is a $\pi$--relatively base point free line bundle \ie after shrinking the representative $\Def(X,L)$ there exists a family of Lagrangian fibrations 
$$   \begin{xy}\xymatrix{  \gX_L \ar[rr]^{\zeta} \ar[dr]_{\pi} & & \IP(\pi_{\star} \kl) \ar[dl] \\ 
										&	\Def(X,L) & 		}   \end{xy}$$
over $\Def(X,L)$ such that $\zeta_0 = f$. 
\end{theo}

Note that $\dim \Def(X) = h^1(X,\kt_X) = b_2(X) - 2$. For a fixed deformation type of irreducible holomorphic symplectic manifolds with lattice $\Lambda$ one can glue all total spaces $\Def(X,L)$ of such families $\gX_L \rightarrow \Def(X,L)$ for $f : X \rightarrow \IP^n$ a Lagrangian fibration and $L$ a line bundle on $X$ as in the theorem. The result is a possibly non--Hausdorff moduli space of Lagrangian fibrations of dimension $b_2(X) - 3$. It is a smooth submanifold of codimension one of the moduli space of marked irreducible holomorphic symplectic manifolds $\gM_{\Lambda}$. 

In the $\kdrei$--type case E. Markman \cite{eyal2} provides methods to describe a connected component of the moduli space of Lagrangian fibrations lattice--theoretically. \\

From now on in this section, $X$ usually denotes a $\kdrei$--type manifold and $\Lambda$ denotes the $\kdrei$--type lattice. However, some of the statements also holds for arbitrary irreducible holomorphic symplectic manifolds (which will be mentioned). \\

For a period $p \in \Omega_{\Lambda}$ let $\Lambda(p)$ denote the integral Hodge structure of weight two of $\Lambda$ determined by the period $p$ that is
$$\Lambda^{2,0}(p) = p \, , \ \ \ \Lambda^{0,2}(p) = \bar{p} \ \ \text{ and } \ \ \Lambda^{1,1}(p) = \left\{ m \in \Lambda_{\CC} \ | \ (m,p) = (m, \bar{p}) = 0\right\} \, .$$
We also set $\Lambda^{1,1}(p, R) \coloneqq \left\{ m \in \Lambda_R \ | \ (m,p) = 0 \right\}$ for $R \in \left\{ \IZ , \IR \right\}$. Further consider the sets
$$ \kc_p \coloneqq \left\{ m \in \Lambda^{1,1}(p, \IR) \ | \ (m,m) > 0 \right\} \ \text{ and } \ \tilde{\kc}_{\Lambda} \coloneqq \left\{ m \in \Lambda_{\IR} \ | \ (m,m) > 0 \right\} \, .$$ Both sets are sometimes called \emph{positive cone}. The latter one $\tilde{\kc}_{\Lambda}$ is connected and $H^2(\tilde{\kc}_{\Lambda}, \IZ) \cong \IZ$, see \cite[Lem. 4.1]{eyal1}. 
\begin{lem}{\cite[4.3]{eyal2}}\label{orientation}  \begin{enumerate} \item The cone $\kc_p$ has two connected components. 
\item A choice of an orientation of $\tilde{\kc}_{\Lambda}$ determines a connected component $\kc^+_p$ of $\kc_p$ for all $p \in \Omega_{\Lambda}$. \end{enumerate}\end{lem}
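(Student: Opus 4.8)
The plan is to realise $\Lambda^{1,1}(p,\IR)$ as the orthogonal complement of a positive definite plane and then to convert an orientation of $\tilde{\kc}_{\Lambda}$ into a consistent sign on $\kc_p$. For (i), write a representative of the period as $p = u + iv$ with $u,v \in \Lambda_{\IR}$; then $(p,p) = 0$ and $(p,\bar p) > 0$ unwind to $(u,u) = (v,v) > 0$ and $(u,v) = 0$, so $W_p := \langle u,v\rangle_{\IR}$ is a positive definite real plane and $\Lambda^{1,1}(p,\IR) = \{m \in \Lambda_{\IR} : (m,u) = (m,v) = 0\} = W_p^{\perp}$. Since the form on $\Lambda$ has signature $(3,\rk\Lambda - 3)$ and $W_p$ is positive definite of dimension two, the form on $W_p^{\perp}$ has signature $(1,\rk\Lambda - 3)$. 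I would then invoke the elementary fact that the positive cone of a nondegenerate real quadratic form of signature $(1,k)$ has exactly two connected components, each convex: after diagonalising the form this cone is $\{x_0^2 > x_1^2 + \cdots + x_k^2\}$, cut into the convex pieces $\{x_0 > 0\}$ and $\{x_0 < 0\}$. This gives the two components $\kc^{\pm}_p$ of $\kc_p$, interchanged by $m \mapsto -m$.

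For (ii), the first observation is that the ordered basis $(u,v)$ equips $W_p$ with a canonical \emph{complex orientation}: rescaling $p$ by $\lambda \in \CC^{\ast}$ acts on $W_p$ by a rotation followed by a positive homothety (its matrix in $(u,v)$ has determinant $|\lambda|^2 > 0$), so the orientation is well defined on $\IP(\Lambda_{\CC})$. Next, for $m \in \kc_p$ the subspace $V_m := W_p \oplus \IR m \subseteq \Lambda_{\IR}$ is an orthogonal sum of positive definite subspaces, hence a three--dimensional, i.e.\ maximal, positive definite subspace, and $(u,v,m)$ is a genuine basis of it because $W_p$ is nondegenerate so $m \notin W_p$. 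The key input is that for any maximal positive definite $V \subseteq \Lambda_{\IR}$ the orthogonal projection onto $V$ restricts to a deformation retraction of $\tilde{\kc}_{\Lambda}$ onto $V \setminus \{0\}$, along $v \mapsto v - s\,\mathrm{pr}_{V^{\perp}}(v)$, which stays in $\tilde{\kc}_{\Lambda}$ because $V^{\perp}$ is negative definite; as $V \setminus \{0\} \simeq S^2$ and the space of maximal positive definite subspaces is connected, these retractions are mutually homotopic, so a generator of $H^2(\tilde{\kc}_{\Lambda},\IZ) \cong \IZ$ --- i.e.\ an orientation of $\tilde{\kc}_{\Lambda}$ --- determines orientations $o_V$ of all maximal positive definite $V$, varying locally constantly with $V$.

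With these in hand I would \emph{define} $\kc^+_p := \{m \in \kc_p : (u,v,m) \text{ is positively oriented in } (V_m, o_{V_m})\}$. The sign of $(u,v,m)$ relative to $o_{V_m}$ depends locally constantly on $m \in \kc_p$, hence is constant on each of the two components of $\kc_p$; and it takes opposite values on the two, since replacing $m$ by $-m$ leaves $V_m$, hence $o_{V_m}$, unchanged but reverses the orientation of $(u,v,m)$. Therefore $\kc^+_p$ is exactly one connected component of $\kc_p$, and the whole recipe is canonical in $p$, so it selects such a component for every $p \in \Omega_{\Lambda}$.

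I expect the main obstacle to be the coherent--orientation step: verifying that a single class in $H^2(\tilde{\kc}_{\Lambda},\IZ)$ really does orient \emph{all} maximal positive definite subspaces compatibly, with no monodromy. This rests on the homotopy equivalence $\tilde{\kc}_{\Lambda} \simeq S^2$ realised by the projection retractions above, the identification of $H^2$ of a once--punctured oriented $3$--space with $\IZ$ generated by the orientation class, and the connectedness of the parameter space of maximal positive definite $3$--planes; once these are pinned down, (i) and the remainder of (ii) reduce to sign bookkeeping.
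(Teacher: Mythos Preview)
The paper does not give its own proof of this lemma: it is stated with a citation to \cite[4.3]{eyal2} and used as a black box. Your argument is correct and is essentially the standard one underlying Markman's treatment: identify $\Lambda^{1,1}(p,\IR)$ with the orthogonal complement of the positive definite real plane $W_p = \langle \mathrm{Re}\,p, \mathrm{Im}\,p\rangle$, so that it carries a form of signature $(1,\rk\Lambda-3)$ and hence a two--component positive cone; then use the deformation retraction of $\tilde{\kc}_{\Lambda}$ onto $V\setminus\{0\}$ for any maximal positive definite $3$--plane $V$ to transport a generator of $H^2(\tilde{\kc}_{\Lambda},\IZ)$ to a coherent system of orientations on all such $V$, and combine this with the canonical complex orientation of $W_p$ to single out the component of $\kc_p$ for which $(u,v,m)$ is a positive frame of $V_m$. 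The one point you flag as delicate --- that the orientations $o_V$ are globally coherent with no monodromy --- is handled exactly as you indicate, by the connectedness of the symmetric space of maximal positive definite $3$--planes in $\Lambda_{\IR}$, so there is no gap.
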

Choose a primitive isotropic class $\lambda \in \Lambda$ and set
$$\Omega_{\lambda^\bot} \ \coloneqq \ \left\{ p \in \Omega_{\Lambda} \ | \ (p, \lambda) = 0 \right\} \, .$$
Further choose a generator of $H^2(\tilde{\kc}_{\Lambda},\IZ)$ \ie an orientation of $\tilde{\kc}_{\Lambda}$, which determines a connected component  $\kc^+_p$ for every period $p$ by Lemma \ref{orientation}. For $p \in \Omega_{\lambda^\bot}$, $\lambda$ belongs to $\Lambda^{1,1}(p,\IR)$ and is contained in the boundary of one of the connected components of $\kc_p$ since $\lambda$ is isotropic. Then consider only such periods $p$ for which $\lambda$ belongs to the closure of the distinguished connected component $\kc^+_p$ of $\kc_p$  of Lemma \ref{orientation} \ie
$$\Omega^+_{\lambda^\bot} \ \coloneqq \ \left\{ p \in \Omega_{\lambda^\bot} \ | \ \lambda \in \partial \kc^+_p \right\} \, . $$

Let $\gM_{\Lambda}$ denote the moduli space of isomorphism classes of marked pairs $(X, \eta)$ of $\kdrei$--type \ie $X$ is an irreducible holomorphic symplectic manifold of $\kdrei$--type and $\eta : H^2(X,\IZ) \rightarrow \Lambda$ is a marking. Choose a connected component $\gM^{\circ}_{\Lambda}$ of $\gM_{\Lambda}$ and consider the period map 
$$ \kp \ : \ \gM^{\circ}_{\Lambda} \longrightarrow \Omega_{\Lambda} \, , \ \ \ (X,\eta) \longmapsto [\eta(H^{2,0}(X))] \, . $$ 

Choose the orientation of $\tilde{\kc}_{\Lambda}$ compatible to $\gM^{\circ}_{\Lambda}$ in the following way. If $(X,\eta) \in \gM^{\circ}_{\Lambda}$ then there is a canonical choice for the connected component of $$\left\{ x \in H^{1,1}(X,\IR) \ | \ (x,x) > 0 \right\}$$ namely the \emph{positive cone} $\kc_X^+$ which contains the Kähler cone $\kk_X$ of $X$. Set $p \coloneqq \kp(X,\eta)$ then $\eta(H^{1,1}(X,\IR)) = \Lambda^{1,1}(p,\IR)$ and compatibility means that $\eta(\kc_X^+) = \kc^+_p$. 

Let $\lambda \in \Lambda$ isotropic. Then for a $(X,\eta) \in \gM^\circ_{\lambda^\bot}$ either $\eta^{-1}(\lambda)$ or $\eta^{-1}(-\lambda)$ belongs to $\partial \kc^+_X$. Assume that the former is the case, otherwise take $-\lambda$. Then set
$$ \gM^{\circ}_{\lambda^\bot} \ \coloneqq \ \kp^{-1}\left( \Omega^+_{\lambda^\bot} \right) \ = \ \left\{ (X,\eta) \in \gM^{\circ}_{\Lambda} \ | \ \eta^{-1}(\lambda) \text{ is of type } (1,1) \text{ and in  } \partial \kc^+_X \ \right\} $$
which is  a connected hypersurface of $\gM^{\circ}_{\Lambda}$ by \cite[Lem. 4.4]{eyal2} (compare with \cite[Cor. 5.11]{eyalprime}). Set 
$$ \gU^{\circ}_{\lambda^\bot} \ \coloneqq \ \left\{ (X,\eta) \in \gM^{\circ}_{\lambda^\bot} \ | \ \eta^{-1}(\lambda) \text{ is nef}  \right\} \, . $$
We claim that this space $\gU^{\circ}_{\lambda^\bot}$ is a connected component of the moduli space of $\kdrei$--type fibrations, see Theorem \ref{modulil} below. Furthermore it is connected and open in $\gM^{\circ}_{\lambda^\bot}$. 

\begin{rem} Recall that a holomorphic line bundle $L \in \Pic(X) \cong \NS(X)$ on an irreducible holomorphic symplectic manifold $X$ is called \emph{nef} if $c_1(L)$ belongs to the closure of the Kähler cone $\kk_X$ in $H^{1,1}(X,\IR)$.  \end{rem}

The following Lemma should be well known, however we recall it for the reader's convenience. 

\begin{lem} \label{liso}Let $f : X \rightarrow B$ be a Lagrangian fibration with $X$ not necessarily of $\kdrei$--type and let $L \coloneqq f^{\star}A$ be the pullback of a line bundle $A$ on $B$. \begin{enumerate}
\item $L$ is isotropic with respect to the Beauville--Bogomolov quadratic form. 
\item If $A$ admits nontrivial sections then $L$ is nef. If $X$ is of $\kdrei$--type and $A = \ko_{\IP^n}(1)$ on $B = \IP^n$, then $L$ is primitive.
\item \cite{matiso} Assume $B \cong \IP^n$ and $A$ ample. If $\kl$ is the universal line bundle of the Kuranishi family $\gX_L \rightarrow \Def(X,L)$ and $D$ a representative of $\Def(X,L)$, then the loci \begin{align*} \Dreg \ & = \ \left\lbrace t \in D \ | \ \varphi_{|\kl_t|} : \gX_t \rightarrow |\kl_t| \text{ is a Lagrangian fibration}  \right\rbrace \ , \\
\Dnef \ & = \ \left\lbrace t \in D \ | \ \kl_t \text{ is nef} \right\rbrace \end{align*}
coincide and are open and dense in $D$. Here $\varphi_{|\kl_t|}$ denotes the induced map by the linear system $|\kl_t|$. 
\end{enumerate}\end{lem}
\begin{proof}\begin{enumerate}
\item This is an immediate consequence from Fujiki's relation \cite[Prop. 3.9]{huy1}.	
\item The pullback $L$ is an effective divisor class, hence $c_1(L)$ belongs to the boundary of the positive cone $\kc^+_X$. By Theorem \ref{mats} we know that $\rho(B) = 1$, therefore $A$ is ample (hence nef, cf. \cite[1.4.1]{larza}), since $A$ admits a nontrivial section. Therefore, if $C \subset X$ is a curve then $L \cdot C = \deg(f^{\star}A|_C) \geq 0$, in particular for any rational curve $C$. By \cite[Prop. 3.2]{huycone} this implies that $c_1(L)$ is contained in the closure of the Kähler cone \ie it is nef.

Assume $X$ to be of $\kdrei$--type. If $L = f^ {\star} \ko_{\IP^n}(1)$ is not primitive, then write $L = k L'$ with $k > 1$ and $L'$ primitive. The line bundle $L'$ is isotropic and nef since $L$ is isotropic by (i) and nef by the first statement. By \cite[Thm. 1.2, Cor. 1.1]{matiso} in combination with \cite[Thm. 1.3, Rmk. 1.8]{eyal2} the induced map $\varphi_{|L'|} \colon X \rightarrow |L'|  = \IP^n$ by $|L'|$ is a Lagrangian fibration. Clearly we have $L' = \varphi_{|L'|}^\star \ko_{\IP^n}(1)$, hence $L = \varphi_{|L'|}^\star \ko_{\IP^n}(k)$. Since $\varphi_{|L'|}$ is surjective, we get
$$ n + 1 \ = \ h^0(X, L) \ \geq \ h^0(\IP^n, \ko_{\IP^n}(k)) \ = \ \begin{pmatrix} n + k \\ n \end{pmatrix} \ > \ n + 1 \, , $$
a contradiction. 

\item By \cite[Thm. 1.2]{matiso} the locus $\Dreg$ is open and dense in $D$. By (ii) we have $\Dreg \subset \Dnef$. The reverse inclusion follows by \cite[Claim 3.2]{matiso}.  
\end{enumerate}
\end{proof}

As a consequence of (iii) of the Lemma above $\gU^{\circ}_{\lambda^\bot}$ is locally isomorphic to $\Def(X,L)$. 

\begin{prop}\label{evm} Let $(X, \eta)$ be a marked irreducible holomorphic symplectic manifold of $\kdrei$--type, $f : X \rightarrow \IP^n$ be a Lagrangian fibration on $X$ and $L = f^{\star} \ko_{\IP^n}(1)$. Set $\lambda \coloneqq \eta(c_1(L))$. Then $\gU^{\circ}_{\lambda^\bot}$ and  $\Def(X,L)$ are locally isomorphic around $(X,\eta)$ and $0$ respectively. \end{prop}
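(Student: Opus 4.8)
The plan is to compare the two spaces by exhibiting both as (open subsets of) period domains and to check that the obvious identification of period domains matches the marking. First I would recall that by Theorem \ref{matsh} the pair $(X,L)$ sits in a family of Lagrangian fibrations $\zeta : \gX_L \to \IP(\pi_\star \kl)$ over $\Def(X,L)$, and that by construction $\Def(X,L)$ is the smooth hypersurface $(c_1(L),\cdot) = 0$ inside $\Def(X)$; via the Local Torelli theorem and the marking $\Sigma$ coming from $\eta$, a representative of $\Def(X,L)$ maps isomorphically onto an open neighbourhood of $p := \kp(X,\eta)$ inside $\Omega_{\lambda^\bot} = \{ p' \in \Omega_\Lambda \mid (p',\lambda) = 0\}$. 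On the other side, $\gM^0_{\lambda^\bot}$ is by definition $\kp^{-1}(\Omega^+_{\lambda^\bot})$ and the global period map is a local biholomorphism by Local Torelli, so $\gM^0_{\lambda^\bot}$ is also locally isomorphic to an open neighbourhood of $p$ in $\Omega_{\lambda^\bot}$, provided $p$ lies in the correct connected component $\Omega^+_{\lambda^\bot}$.

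The next step is therefore to verify that $(X,\eta) \in \gM^0_{\lambda^\bot}$ and in fact in $\gU^0_{\lambda^\bot}$. That $\eta^{-1}(\lambda) = c_1(L)$ is of type $(1,1)$ is clear; that $L$ is isotropic and nef is Lemma \ref{liso}(i),(ii) (here $A = \ko_{\IP^n}(1)$ is very ample, so $k \geq 1 \geq 0$); and nefness of $L$ is exactly the condition that $\lambda$ lies in the closure of the Kähler cone, hence — using the compatibility of $\gM^0_\Lambda$ with the chosen orientation of $\tilde\kc_\Lambda$, which forces $\eta(\kc_X^+) = \kc^+_p$ — that $\lambda \in \partial\kc^+_p$, since $\lambda$ is isotropic and nonzero. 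Thus $p \in \Omega^+_{\lambda^\bot}$ and $(X,\eta) \in \gU^0_{\lambda^\bot}$.

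It then remains to match not just the underlying complex spaces but the identifications near the respective base points. Both $\Def(X,L)$ and $\gU^0_{\lambda^\bot}$ carry period maps to $\Omega^+_{\lambda^\bot}$ which are open embeddings near $0$ and near $(X,\eta)$ respectively, and the marking $\Sigma$ on the Kuranishi family restricts the global marking $\eta$ on the central fibre, so the two period maps agree after composing with the universal property identification $\gX_L \hookrightarrow \gX$; concretely, the composite $\Def(X,L) \to \gU^0_{\lambda^\bot}$ sending $t$ to the marked pair $((\gX_L)_t, \Sigma_t)$ is the desired local isomorphism, its inverse being given by the period map followed by the inverse of $\kp|_{\Def(X,L)}$. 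The one genuine subtlety — and the step I expect to be the main obstacle — is the bookkeeping around connected components and orientations: one must be careful that the component $\gM^0_\Lambda$ of the marked moduli space, the chosen orientation of the positive cone, the component $\Omega^+_{\lambda^\bot}$ singled out by $\lambda \in \partial\kc^+_p$, and the nef (rather than merely isotropic $(1,1)$) condition cutting out $\gU^0_{\lambda^\bot}$ all line up, so that the image of the local period map of $\Def(X,L)$ actually lands in $\gU^0_{\lambda^\bot}$ and not in some other component of $\gM^0_{\lambda^\bot}$; this is where Lemma \ref{liso}(iii) — openness of the nef locus in $\Def(X,L)$ — and the orientation-compatibility hypothesis do the real work, guaranteeing that nefness of $\kl_t$ persists on a neighbourhood and hence that a whole neighbourhood of $0$ maps into $\gU^0_{\lambda^\bot}$.
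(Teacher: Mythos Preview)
Your proposal is correct and follows essentially the same approach as the paper: both construct the map $t \mapsto ((\gX_L)_t, \Sigma_t)$ from $\Def(X,L)$ into the marked moduli space, invoke Lemma \ref{liso}(iii) to guarantee that $\kl_t$ stays nef on a small enough representative so that the image lands in $\gU^0_{\lambda^\bot}$, and then conclude by comparing codimensions of the two hypersurfaces. The paper's argument is slightly more direct in that it uses the local isomorphism $F : \Def(X) \to \gM_\Lambda$ coming from the construction of the moduli of marked pairs rather than passing through the period domain $\Omega_{\lambda^\bot}$ as an intermediary, but this is only a cosmetic difference.
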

\begin{proof} By construction, the moduli of marked pairs of $\kdrei$--type $\gM_{\Lambda}$ is locally around $(X,\eta)$ isomorphic to the open set $\Def(X)$. This identification restricts to an open inclusion $\Def(X,L) \subset \gM_{\lambda^\bot}$. Conclude by using Lemma \ref{liso} (iii) to see that locally around $0$ and $(X,\eta)$ the spaces $\Def(X,L)$ and $\gU^\circ_{\lambda^\bot}$ respectively coincide. \end{proof}

\begin{theo}\label{modulil} Let $\lambda$ be a primitive and isotropic element in the $\kdrei$--lattice $\Lambda$. The space $\gU^{\circ}_{\lambda^\bot}$ has the following properties. \begin{enumerate} 
\item It parametrizes isomorphism classes of marked pairs $(X,\eta)$ of $\gM^{\circ}_{\Lambda}$ with $X$ of $\kdrei$--type admitting a Lagrangian fibration $f : X \rightarrow \IP^n$ such that $$\eta\left(c_1\left(f^{\star} \ko_{\IP^n}(1)\right) \right) \ = \ \lambda \, .$$
\item It is open in $\gM^{\circ}_{\lambda^\bot}$ and connected. 
\item It is smooth and of dimension $20$. \end{enumerate}\end{theo}
\begin{proof} \begin{enumerate} 
\item Let $(X, \eta) \in \gU^{\circ}_{\lambda^\bot}$. As $H^1(X,\ko_X) = 0$ the exponential sequence on $X$ is 
$$ \cdots 0 \longrightarrow \Pic(X) \stackrel{c_1}{\longrightarrow} H^2(X,\IZ) \longrightarrow H^2(X,\ko_X) \longrightarrow \cdots \, .$$
Since $\eta^{-1}(\lambda)$ is of type $(1,1)$ it is in the kernel of $H^2(X,\IZ) \rightarrow H^2(X,\ko_X)$ so we can find an unique line bundle $L$ on $X$ such that $c_1(L) = \eta^{-1}(\lambda)$. Then by \cite[Thm. 1.2, Cor. 1.1]{matiso}  in combination with \cite[Thm. 1.3, Rmk. 1.8]{eyal2} $L$ induces a Lagrangian fibration $f : X \rightarrow |L^{\star}| = \IP^n$ since $\eta^{-1}(\lambda)$ is nef by assumption. 

Conversely let $(X,\eta)$ be a marked pair with $X$ of $\kdrei$--type admitting a Lagrangian fibration $f : X \rightarrow \IP^n$. Then $\lambda \coloneqq \eta(c_1(f^{\star}\ko_{\IP^n}(1)))$ is primitive and isotropic by Lemma \ref{liso} (i) and (ii). We then have $\kp(X,\eta) \in \Omega^{+}_{\lambda^\bot}$ \ie $(X,\eta)$ is in $\gM^{\circ}_{\lambda^\bot}$. In particular $\eta^{-1}(\lambda) = c_1(f^{\star}\ko_{\IP^n}(1))$ is nef by Lemma \ref{liso} (ii). This implies that $(X,\eta)$ is in $\gU^{\circ}_{\lambda^\bot}$. 
\item By Proposition \ref{evm} and Lemma \ref{liso} (iii) $\gU^{\circ}_{\lambda^\bot}$ is open. Let $(X,\eta)$ be in $\gM^{\circ}_{\lambda^\bot} \setminus \gU^{\circ}_{\lambda^\bot}$, then by definition $\eta^{-1}(\lambda)$ is not nef. By \cite[Prop. 3.14.]{ulrike1} there exists a monodromy operator $g \in \Mon^2(X)$ such that $g$ preserves the Hodge structure of $H^2(X,\IZ)$ and $g(\eta^{-1}(\lambda)) \in \overline{\kb \kk}_X$ where the latter denotes the closure of the birational Kähler cone $\kb \kk_X$. By \cite[Cor. 1.5]{eyalyoshi} there exists a bimeromorphic map $\phi : X \rightarrow X'$ where $X'$ is irreducible holomorphic symplectic such that $g(\eta^{-1}(\lambda)) = \phi^{\star} \alpha$ where $\alpha \in \overline{\kk}_X$ is nef on $X'$. Then $\eta' := \eta \circ g^{-1} \circ \phi^{\star}$ is a marking on $X'$ and $\eta'^{-1}(\lambda) = \alpha$ is nef, hence the pair $(X', \eta')$ is contained in $\gU^{\circ}_{\lambda^\bot}$. Since $g$ preserves the Hodge structure we have in particular $P(X,\eta) = P(X', \eta')$ for the periods. By M. Verbitsky's Global Torelli \cite[Thm. 4.24]{Verb11} the pairs $(X, \eta)$ and $(X', \eta')$ are inseparable points of $\gM^\circ_{\Lambda}$. This shows that the Hausdorffization of $\gM^\circ_{\lambda^\bot}$ coincide with the Hausdorffization of $\gU^\circ_{\lambda^\bot}$ which therefore must be connected since $\gM^{\circ}_{\lambda^\bot}$ is connected. We conclude that $\gU^\circ_{\lambda^\bot}$ is connected as its Hausdorffization is.

\item This was shown in Proposition \ref{evm}.
\end{enumerate} \end{proof} 

In the following we deal with the question: when do two Lagrangian fibrations on $\kdrei$--type manifolds lie in the same connected component  $\gU^{\circ}_{\lambda^\bot}$?

\begin{defin}\cite[5.2]{eyalprime}\label{pairdef} Let $X_i$, $i = 1, 2$, denote two irreducible holomorphic symplectic manifolds and $L_i$ line bundles on $X_i$. The pairs $(X_1, L_1)$ and $(X_2, L_2)$ are called \emph{deformation equivalent} if there exist a family $\pi : \kX \rightarrow S$ of irreducible holomorphic symplectic manifolds over a connected complex space $S$ with finitely many irreducible components, a section $e$ of $R^2\pi_{\star} \IZ$ which is of Hodge type $(1,1)$ everywhere, points $t_i$ in $S$ such that $\kX_{t_i} = X_i$ and $e_{t_i} = c_1(L_i)$. \end{defin}

\begin{prop}\label{defo} Let $f_i : X_i \rightarrow \IP^n$, $i = 1,2$, denote two Lagrangian fibrations with $X_i$ of $\kdrei$--type and set $L_i \coloneqq f_i^{\star}\ko_{\IP^n}(1)$. Then the following statements are equivalent.
\begin{enumerate}
 \item The Lagrangian fibrations $f_i$ are deformation equivalent in sense of Definition \ref{deflf}. 
 \item The pairs $(X_i, L_i)$ are deformation equivalent.
 \item There exist markings $\eta_i : H^2(X_i, \IZ) \rightarrow \Lambda$ such that
$$ \eta_1(c_1(L_1)) \ = \ \eta_2(c_1(L_2)) $$
and $\eta_2^{-1} \circ \eta_1$ is a parallel transport operator. 
\item There exist markings $\eta_i : H^2(X_i, \IZ) \rightarrow \Lambda$ such that the marked pairs $(X_i, \eta_i)$ are contained in the same connected component $\gU^{\circ}_{\lambda^\bot}$ for a primitive isotropic class $\lambda$ in the $\kdrei$--type lattice.
\end{enumerate}\end{prop}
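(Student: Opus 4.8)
The plan is to run the cycle (i) $\Rightarrow$ (ii) $\Rightarrow$ (iii) $\Rightarrow$ (iv) $\Rightarrow$ (i). The two implications concerning only deformations of pairs are the soft ones. For (i) $\Rightarrow$ (ii) I would start from a family of Lagrangian fibrations $\phi:\kX\to P$ over a connected $S$ with $\phi_{t_i}=f_i$, and pull back to $\kX$ the primitive ample class from the family of base spaces: the base of each $\phi_s$ is $\IP^n$ (being smooth, by Hwang and Lehn--Greb), so the second--cohomology local system of $P\to S$ has rank one and carries a distinguished flat generator restricting to $c_1(\ko_{\IP^n}(1))$ on every fibre; its $\phi$--pullback is a section $e$ of $R^2\pi_{\star}\IZ$ with $e_{t_i}=c_1(\phi_{t_i}^{\star}\ko_{\IP^n}(1))=c_1(L_i)$, which exhibits $(X_1,L_1)$ and $(X_2,L_2)$ as deformation equivalent in the sense of Definition \ref{pairdef}. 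For (ii) $\Rightarrow$ (iii) I would choose a path $\gamma$ in the connected base $S$ of such a deformation from $t_1$ to $t_2$: parallel transport $P_{\gamma}:H^2(X_1,\IZ)\to H^2(X_2,\IZ)$ in $R^2\pi_{\star}\IZ$ is, by definition, a parallel transport operator, and it sends the flat section value $e_{t_1}=c_1(L_1)$ to $e_{t_2}=c_1(L_2)$; fixing any marking $\eta_2$ of $X_2$ and setting $\eta_1:=\eta_2\circ P_{\gamma}$ then produces markings with $\eta_1(c_1(L_1))=\eta_2(c_1(L_2))$ and $\eta_2^{-1}\circ\eta_1=P_{\gamma}$ a parallel transport operator.

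For (iii) $\Rightarrow$ (iv) put $\lambda:=\eta_1(c_1(L_1))=\eta_2(c_1(L_2))$, which is primitive and isotropic by Lemma \ref{liso}. Since $g:=\eta_2^{-1}\circ\eta_1$ is a parallel transport operator, $(X_1,\eta_1)$ and $(X_2,\eta_2)$ lie in a common connected component of $\gM_{\Lambda}$ by Markman's description of its components \cite{eyal1}; after post--composing both markings with a fixed orientation--reversing isometry of $\Lambda$ if necessary — which only replaces $\lambda$ by its image and leaves (iii) intact — this component is an orientation--compatible $\gM^0_{\Lambda}$. Now $\eta_i^{-1}(\lambda)=c_1(L_i)$ is of type $(1,1)$ and nef by Lemma \ref{liso} (ii), hence lies in the closure of the K\"ahler cone and, being isotropic, in $\partial\kc^+_{X_i}$; compatibility then gives $\lambda\in\partial\kc^+_{p_i}$ for $p_i:=\kp(X_i,\eta_i)$, so $p_i\in\Omega^+_{\lambda^\bot}$ and $(X_i,\eta_i)\in\gM^0_{\lambda^\bot}$. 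As $\eta_i^{-1}(\lambda)$ is moreover nef, both pairs lie in $\gU^0_{\lambda^\bot}$, and since $\gM^0_{\lambda^\bot}$ is connected by \cite[Lem. 4.4]{eyal2} this $\gU^0_{\lambda^\bot}$ is a single connected component of the moduli space $\gH_{\Lambda}$ of $\kdrei$--type fibrations, which is (iv).

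For (iv) $\Rightarrow$ (i), suppose $(X_1,\eta_1)$ and $(X_2,\eta_2)$ both lie in $\gU^0_{\lambda^\bot}$. By Proposition \ref{evm} this connected space is covered by open charts, each isomorphic to some $\Def(X,L)$, over which Theorem \ref{matsh} produces a family of Lagrangian fibrations $\gX_L\to\IP(\pi_{\star}\kl)\to\Def(X,L)$ with $\pi_{\star}\kl$ locally free. Because $h^1(X,\ko_X)=0$ the fibrewise line bundle with first Chern class $\eta_t^{-1}(\lambda)$ is uniquely determined and the total spaces are restrictions of the tautological family of marked pairs, so these local families agree on overlaps and glue to a family of Lagrangian fibrations over $\gU^0_{\lambda^\bot}$; by Theorem \ref{modulil} (i) its fibre over $(X_i,\eta_i)$ is the fibration induced by the nef class $\eta_i^{-1}(\lambda)=c_1(L_i)$ via \cite[Thm. 1.3]{eyal2}, i.e. $f_i$ (up to an automorphism of $\IP^n$). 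Hence $f_1$ and $f_2$ are deformation equivalent. (One may instead glue only the finitely many charts along a chain joining $(X_1,\eta_1)$ to $(X_2,\eta_2)$, avoiding the tautological family over the non--Hausdorff $\gM_{\Lambda}$.)

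I expect the equivalence (iii) $\Leftrightarrow$ (iv) to be the crux: it rests on Markman's structure theory identifying the connected components of $\gM_{\Lambda}$ with the re--marking orbits of parallel transport operators, together with the connectedness \cite[Lem. 4.4]{eyal2} of $\gM^0_{\lambda^\bot}$, and it requires careful bookkeeping of the orientation of $\tilde{\kc}_{\Lambda}$ and of the fact that the hypersurface $\gM^0_{\lambda^\bot}$ has a single component; the gluing of local families in (iv) $\Rightarrow$ (i) is the secondary technical point.
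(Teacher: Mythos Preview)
Your cycle (i)$\Rightarrow$(ii)$\Rightarrow$(iii)$\Rightarrow$(iv)$\Rightarrow$(i) matches the paper's proof, and the first three implications are argued in essentially the same way; the paper is terser on (iii)$\Rightarrow$(iv), simply noting that a parallel transport operator places both marked pairs in one component $\gM^0_\Lambda$, whereupon $\lambda:=\eta_i(c_1(L_i))$ and nefness of $L_i$ force $(X_i,\eta_i)\in\gU^0_{\lambda^\bot}$, but your more careful bookkeeping of orientation and of $\partial\kc^+_{X_i}$ is fine.

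The one substantive difference is in (iv)$\Rightarrow$(i). Your primary argument --- gluing the local Matsushita families on overlaps to a global family over $\gU^0_{\lambda^\bot}$ by appealing to ``the tautological family of marked pairs'' --- is not justified as written: $\gM_\Lambda$ is non--Hausdorff and carries no universal family, so you cannot simply say the local total spaces are restrictions of a common object. Your parenthetical alternative is the correct fix and is exactly what the paper does, but the paper makes it fully explicit and thereby sidesteps any overlap--compatibility issue: choose a path in $\gU^0_{\lambda^\bot}$ from $(X_1,\eta_1)$ to $(X_2,\eta_2)$, cover it by finitely many charts $\Def_k$ each carrying a Matsushita family $\zeta_k:\gX_k\to P_k$, pick a \emph{single point} $z_k\in\Def_k\cap\Def_{k+1}$, and glue the bases, total spaces, and projective bundles only at these points (identifying $(\gX_k)_{z_k}\cong(\gX_{k+1})_{z_k}$). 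The resulting base $S=\coprod_k\Def_k/\!\sim$ is a connected complex space with finitely many irreducible components --- exactly the generality permitted in the definition of a family of Lagrangian fibrations --- and no agreement on open overlaps is required.
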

\begin{proof} (i) $\Rightarrow$ (ii) Consider a family of Lagrangrian fibrations $\phi : \kX \rightarrow P$ over a complex space $S$ with points $t_i$ such that $\phi_{t_i} = f_i$ where we can assume that $P$ is a projective bundle as the $f_i$ are fibered over $\IP^n$. Let $\pi : \kX \rightarrow S$ denote the family of irreducible holomorphic symplectic manifolds belonging to the family $\phi$. Let $\kl \coloneqq \phi^{\star} \ko_P(1)$ then $e_t \coloneqq c_1(\kl|_{\kX_t})$ is clearly of Hodge type $(1,1)$ everywhere and defines a section of $R^2 \pi_{\star} \IZ$ such that $e_{t_i} = L_i$ hence the pairs $(X_i, L_i)$ are deformation equivalent.

(ii) $\Rightarrow$ (iii) Pick markings $\eta_i$ that only differ by a parallel transport operator induced by a family $\pi : \kX \rightarrow S$ as in the Definition \ref{pairdef}.


(iii) $\Rightarrow$ (iv) As $\eta_2^{-1}\circ \eta_1$ is a parallel transport operator the manifolds $X_i$ belong to a family of irreducible holomorphic symplectic manifolds. Hence the marked pairs $(X_i, \eta_i)$ belong to a connected component $\gM^{\circ}_{\Lambda}$ of the moduli of marked pairs. The condition that $\lambda \coloneqq \eta_1(c_1(L_1)) = \eta_2(c_1(L_2))$ then implies that the pairs $(X_i, \eta_i)$ are contained in $\gM^\circ_{\lambda^\bot}$, but also in $\gU^{\circ}_{\lambda^\bot}$ since the $L_i$ are nef and $\gU^{\circ}_{\lambda^\bot}$ is connected by Theorem \ref{modulil} (ii). 

(iv) $\Rightarrow$ (i) Choose a path $\gamma$ in $\gU^{\circ}_{\lambda^\bot}$ connecting the marked pairs $(X_i, \eta_i)$. We can choose finitely many points $x_1, \ldots, x_N$ which lie on $\gamma$ with the following properties
\begin{itemize}
	\item $(X_1,\eta_1) = x_1$ and $(X_2, \eta_2) = x_N$
	\item By Theorem \ref{modulil} each $x_k$ corresponds to a Lagrangian fibration $f_k : X_k \rightarrow \IP^n$ which belongs by Theorem \ref{matsh} to a family of Lagrangian fibrations $\zeta_k : \gX_k \rightarrow P_k$ parametrised by $\Def_k \coloneqq \Def(X_k, f_k^{\star}\ko_{\IP^n}(1))$. Therefore each $x_k$ admits the neighbourhood $\Def_k$ with $\Def_k \cap \Def_{k+1} \neq \emptyset$, for $k = 1, \ldots, N-1$.
	\item Note that $\gamma$ is covered by the $\Def_k$, $k = 1, \ldots, N$.  
\end{itemize}

Choose points in $z_k$ in $\Def_k \cap \Def_{k+1}$ for $k = 1, \ldots, N-1$. \begin{itemize} \item Set $S \coloneqq \coprod_{k=1}^N \Def_k/\sim$ where $\sim$ glues $\Def_k$ and $\Def_{k+1}$ at the point $z_k$ for $k = 1, \ldots, N-1$. 
\item Further set $\kX \coloneqq \coprod_{k=1}^N \gX_k / \sim$ where $\sim$ glues $\gX_k$ and $\gX_{k+1}$ at $(\gX_k)_{z_k}$ and $(\gX_{k+1})_{z_k}$. Note that those fibers are isomorphic. 
\item Denote by $\pi_k : \gX_k \rightarrow \Def_k$ the family of irreducible holomorphic symplectic manifolds belonging to the family $\zeta_k$ as in Theorem \ref{matsh}. Then the map $\pi : \kX \rightarrow S$ defined by $\pi|_{\gX_k} \coloneqq \pi_k$ is well defined and is a family of irreducible holomorphic symplectic manifolds. 
\item Set $P \coloneqq \coprod_{k=1}^N P_k / \sim$ where $\sim$ glues $P_k$ and $P_{k+1}$ at the projective spaces $(P_k)_{z_k}$ and $(P_{k+1})_{z_{k+1}}$ for $k = 1,\ldots,N-1$. We get a morphism $P \rightarrow S$ which is induced by the morphisms $P_k \rightarrow \Def_k$, $k = 1,\ldots, N$. This map is a family of projective spaces hence a projective bundle. \end{itemize}
Putting everything together we can define a map $\phi : \kX \rightarrow P$ locally given by the $\zeta_k : \gX_k \rightarrow P_k$, $k = 1,\ldots, N$. This defines by construction a family of Lagrangian fibrations over $S$ containing $f_1 = \phi_{x_1}$ and $f_2 = \phi_{x_2}$.
\end{proof}

\section{Polarization types of Lagrangian fibrations}\label{poltypes}

Let $X$ be an irreducible holomorphic symplectic manifold of dimension $2n$ and $f : X \rightarrow B$ a Lagrangian fibration. For a general point $t \in B$ the associated fiber $F \coloneqq f^{-1}(t)$ is an abelian variety even when $X$ is not projective. That $F$ is actually projective follows from  \cite[Prop. 2.1]{camp05}. A related statement of the latter is Proposition \ref{camp}. 

In this section it is explained how to associate to $f$ a tuple $\pol(f) \in \IZ^n$ of positive integers which is called the \emph{polarization type} of the fibration. This type $\pol(f)$ is the type of a polarization on $F$ in the classical sense.

\begin{defin} Let $F$ denote a smooth fiber. We say that a Kähler class $\omega \in H^{1,1}(X,\IR)$ is \emph{special with respect to} $F$ if the restriction $\omega|_F$ is integral \ie contained in $H^2(F,\IZ)$ and primitive \ie indivisible. We call such an $\omega$ just \emph{special} if there is no confusion with the fiber $F$. \end{defin}

\begin{exam} Of course every ample line bundle $\kl \in \Pic(X)$ defines a Kähler class $c_1(\kl) \in H^{1,1}(X,\IZ)$ which is integral on all fibers and for each smooth fiber $F$ we can find a natural number $k$ such that $\omega \coloneqq \frac{1}{k} c_1(\kl)$ is special with respect to $F$.
\end{exam}

The following Proposition is related to an observation of C. Voisin, see \cite[Proof of Prop. 2.1]{camp05}. 

\begin{prop} \label{camp} For every smooth fiber $F$ there is a Kähler class $\omega \in H^{1,1}(X,\IR)$ which is special with respect to $F$. \label{restriction} \end{prop}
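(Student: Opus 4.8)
The plan is to use the Lagrangian property of $F$ to identify the image of the Kähler cone of $X$ under restriction to $F$ with an open cone inside a \emph{rational} subspace of $H^2(F,\IR)$, and then to invoke density of lattice points in a cone.

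First I would consider the restriction map $r := \iota^{\star} : H^2(X,\IR) \to H^2(F,\IR)$ for the inclusion $\iota : F \hookrightarrow X$; it is defined over $\IZ$ and, $\iota$ being holomorphic, is a morphism of Hodge structures. Since the smooth fiber $F$ is Lagrangian by Theorem \ref{mats}, we have $\iota^{\star}\sigma = \sigma|_F = 0$, and as $H^{2,0}(X) = \CC\,\sigma$ is one--dimensional this forces $r$ to vanish on $H^{2,0}(X)$, hence on $H^{0,2}(X)$, hence on the real two--plane $\langle \mathrm{Re}\,\sigma, \mathrm{Im}\,\sigma\rangle_{\IR}$. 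Therefore $V := r(H^2(X,\IR)) = r(H^{1,1}(X,\IR))$, which in addition lies in $H^{1,1}(F,\IR)$. The point is that $V$ is a rational subspace of $H^2(F,\IR)$: the subgroup $r(H^2(X,\IZ)) \subseteq H^2(F,\IZ)$ is finitely generated with real span $V$, so $\Lambda_F := H^2(F,\IZ) \cap V$ is a full lattice in $V$ which is saturated in $H^2(F,\IZ)$ (a class divisible in $H^2(F,\IZ)$ has its quotient again in the linear subspace $V$, hence in $\Lambda_F$).

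Next I would bring in the Kähler cone. As $X$ is compact Kähler, $\kk_X$ is a nonempty open cone in $H^{1,1}(X,\IR)$, and by the previous step $r$ maps $H^{1,1}(X,\IR)$ onto $V$; hence $r(\kk_X)$ is a nonempty open subset of $V$, invariant under positive scaling, and contained in $\kk_F$ since the restriction of a Kähler class to a complex submanifold is again Kähler (so in particular $0 \notin r(\kk_X)$). Because the rational points of the rational vector space $V$ are dense, there is a nonzero $q \in r(\kk_X) \cap H^2(F,\mathbb{Q})$. Let $p$ be the generator of the rank--one lattice $\IR q \cap \Lambda_F$ lying on the same ray as $q$; it is a positive rational multiple of $q$, so $p \in r(\kk_X)$ by scale--invariance, and it is primitive in $\Lambda_F$, hence primitive in $H^2(F,\IZ)$ by saturation. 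Choosing any $\omega \in \kk_X$ with $r(\omega) = p$ produces a Kähler class on $X$ with $\omega|_F = p$ integral and primitive, i.e. $\omega$ is special with respect to $F$.

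I expect the only genuine difficulty to be the first paragraph, namely the realization that the Lagrangian condition $\sigma|_F = 0$ is precisely what makes $V$ rational. Without it we would only know $r(\kk_X)$ to be open in the a priori possibly irrational subspace $r(H^{1,1}(X,\IR))$, which for non--projective $X$ might contain no nonzero class of $H^2(F,\mathbb{Q})$, and the whole density argument would collapse. The remainder is the standard fact that a nonempty open cone in a real vector space equipped with a full lattice contains a primitive lattice vector; the only routine point to verify is that $r = \iota^{\star}$ is a morphism of Hodge structures, which is immediate from holomorphicity of $\iota$.
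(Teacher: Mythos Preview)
Your proof is correct and takes essentially the same approach as the paper: both use the Lagrangian condition $\sigma|_F = 0$ to kill $H^{2,0}(X)\oplus H^{0,2}(X)$ under restriction, and then exploit density of rational classes to land an integral primitive class in the image of the K\"ahler cone. The only cosmetic difference is that the paper works upstairs in $H^2(X,\IR)$ (projecting a rational $\alpha$ onto $H^{1,1}(X,\IR)$ and observing $r(\alpha)=r(p(\alpha))$), while you work downstairs in $H^2(F,\IR)$ by first identifying $V=r(H^2(X,\IR))=r(H^{1,1}(X,\IR))$ as a rational subspace containing the open cone $r(\kk_X)$.
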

\begin{proof} 	We have a surjective projection $p : H^2(X,\IR) \rightarrow H^{1,1}(X,\IR)$ which is induced by the Hodge decomposition. As $H^2(X,\IQ)$ is dense in $H^2(X,\IR)$ also $p(H^2(X,\IQ))$ is dense in $H^{1,1}(X,\IR)$. Since the Kähler cone $\kk_X$ is open in $H^{1,1}(X,\IR)$ we have $p(H^2(X,\IQ)) \cap \kk_X \neq \emptyset$ so that we can find a class $\alpha \in H^2(X,\IQ)$ with $p(\alpha) \in \kk_X$. 
	Denote by $r : H^2(X,\IR) \rightarrow H^2(F,\IR)$ the restriction. As $F$ is Lagrangian and $H^{2,0}(X)$ is generated by the holomorphic symplectic form the restriction $r_{H^{2,0}} : H^{2,0}(X) \rightarrow H^{2,0}(F)$ on holomorphic $2$--forms is zero, hence the non--$(1,1)$ parts of $\alpha$ are in the kernel of $r$ so we have $r(\alpha) = r( p(\alpha))$.
Then take a positive number $m > 0$ such that $m r(\alpha) \in H^2(F,\IZ)$ is integral and primitive. Consequently $\omega \coloneqq m p(\alpha)$ is a special Kähler class on $X$ with respect to $F$ since $r(\omega) = m r(p(\alpha)) = m r(\alpha) \in H^2(F,\IZ)$. \end{proof}

A \emph{polarization} on an abelian variety $A$ is by definition the first Chern class $c_1(L) \in H^2(A,\IZ)$ of an ample line bundle $L$ on $A$. The restriction $\omega|_F$ of a Kähler class which is special with respect to the smooth fiber $F$ defines a primitive polarization on the abelian variety $F$.

\begin{lem}\label{ray} Let $\kk_X$ be the Kähler cone of $X$, $F$ a smooth fiber and $r : H^2(X,\CC) \rightarrow H^2(F,\CC)$ the restriction. 
\begin{enumerate}
\item Then $\rk r = 1$, and
\item $G \coloneqq r(\kk_X) \subset H^{1,1}(F,\IR)$ is a ray that contains integral points. 
\end{enumerate} \end{lem}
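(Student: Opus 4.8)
The plan is to deduce both parts from the Beauville--Bogomolov--Fujiki relation on $X$ together with the Hodge index theorem on the abelian variety $F$. First I would record two elementary remarks. Since $F$ is Lagrangian the holomorphic symplectic form restricts to $0$ on $F$, so $r$ annihilates $H^{2,0}(X)$, hence also $H^{0,2}(X)$ by conjugation; as $r$ is a morphism of weight--two Hodge structures its image lies in $H^{1,1}(F,\CC)$, in particular $r\bigl(H^2(X,\IR)\bigr)\subseteq H^{1,1}(F,\IR)$. Moreover, for any Kähler class $\omega$ on $X$ the restriction $\omega|_F$ is a Kähler class on $F$, so $\int_F(\omega|_F)^n>0$ and thus $r(\omega)\neq 0$; hence $\rk r\geq 1$. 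Fix such an $\omega$ and put $h:=r(\omega)$.

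For the reverse bound I would bring in the class of a fibre. By Theorem \ref{mats} the base $B$ is projective of dimension $n$; choose an ample line bundle $A$ on $B$ and set $L:=f^{\star}A$, so that $c_1(L)^n$ is a positive multiple of the fibre class $[F]\in H^{2n}(X,\IR)$, while $q_X(c_1(L))=0$ by Lemma \ref{liso}(i) and $q_X(\omega,c_1(L))>0$ because $L$ is a nonzero nef line bundle (Lemma \ref{liso}(ii)) and $\omega$ is Kähler. Now for $\alpha_1,\dots,\alpha_n\in H^2(X,\IR)$, the polarized form of the Fujiki relation \cite[Prop. 3.9]{huy1} expresses $\int_X\alpha_1\cdots\alpha_n\,c_1(L)^n$ as a nonzero constant times a sum, over the perfect matchings of the $2n$ classes $\alpha_1,\dots,\alpha_n,c_1(L),\dots,c_1(L)$, of products of values of $q_X$; since $q_X(c_1(L),c_1(L))=0$ only the matchings pairing each copy of $c_1(L)$ to some $\alpha_i$ survive, which yields
$$ \int_F r(\alpha_1)\cdots r(\alpha_n) \ = \ c\cdot\prod_{i=1}^{n} q_X\bigl(\alpha_i,c_1(L)\bigr) $$
for a nonzero constant $c$ depending only on $n$ and on the Fujiki constant of $X$ (the latter being positive, as $\int_X\omega^{2n}>0$). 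Thus the restriction to $F$ of the top intersection form of $X$ is, up to the constant $c$, the $n$-th power of the linear form $\alpha\mapsto q_X(\alpha,c_1(L))$.

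This I would feed into the Hodge index theorem on $F$. The form $Q(x,y):=\int_F x\,y\,h^{n-2}$ on $H^{1,1}(F,\IR)$ has signature $(1,h^{1,1}(F)-1)$, with $h$ spanning the positive part (note $Q(h,h)=\int_F h^n>0$). Assume $\rk r\geq 2$ and pick $\alpha$ with $v:=r(\alpha)$ not proportional to $h$. Evaluating the displayed identity in the three cases where the $\alpha_i$ are taken to be $\omega$'s and $\alpha$'s in the appropriate proportions gives $Q(v,v)\,Q(h,h)=Q(v,h)^2$, so $Q$ has vanishing discriminant on the plane $\langle h,v\rangle$; being positive on $h$, it is positive semidefinite there of rank exactly one, hence possesses a nonzero radical vector $\gamma\in\langle h,v\rangle$. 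But $\gamma$ is $Q$-orthogonal to $h$, and $Q$ is negative definite on $h^{\perp}$, so $\gamma=0$ --- a contradiction. (For $n=1$ the claim is trivial since $H^2(F,\CC)$ is one--dimensional.) Hence $\rk r=1$, which is (1). For (2), by (1) we have $r\bigl(H^2(X,\IR)\bigr)=\IR h$, so $G=r(\kk_X)$ is a cone inside the line $\IR h$; and for any Kähler $\omega'$ on $X$ the class $r(\omega')=\omega'|_F$ is Kähler on $F$, hence a positive multiple of $h$ (pair with a curve $C\subset F$: both $\int_C\omega'|_F$ and $\int_C h$ are positive). So $G$ is a nonempty subcone of $\IR_{>0}h$, whence $G=\IR_{>0}h$ is a ray; finally Proposition \ref{camp} provides a Kähler class $\omega'$ on $X$ special with respect to $F$, and then $r(\omega')=\omega'|_F$ is an integral point lying in $G$.

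The step I expect to be the main obstacle is the Fujiki computation of the second paragraph: one has to handle the combinatorics of the matchings carefully enough to see that the isotropy $q_X(c_1(L))=0$ genuinely collapses the sum to the single product displayed, and to check that the surviving constant is nonzero. Everything afterwards is formal, being the linear algebra of the Hodge index form on $F$ together with the two elementary remarks about restrictions of Kähler classes.
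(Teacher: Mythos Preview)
Your argument is correct and takes a genuinely different route from the paper's. The paper proves $\rk r \le 1$ via deformation theory: it introduces the locus $D_F \subset \Def(X)$ of deformations of $X$ in which $F$ survives as a subvariety, invokes Voisin's result \cite{Voi92} that $D_F$ is smooth of codimension $\rk r$, and then uses Matsushita's Theorem~\ref{matsh} to exhibit $\Def(X,L)$ as a codimension--one subspace contained in $D_F$, forcing $\rk r = 1$. You instead stay entirely on the level of intersection theory: the polarized Fujiki relation, together with the isotropy of $c_1(L)$, shows that all $n$--fold products of restricted classes on $F$ factor through the single linear form $q_X(\,\cdot\,,c_1(L))$, and the Hodge--Riemann relations on $F$ then rule out a two--dimensional image. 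Your approach is more elementary in that it avoids both Voisin's deformation result and the nontrivial Theorem~\ref{matsh}; the paper's approach, on the other hand, yields as a byproduct the identification $D_F = \Def(X,L)$ of germs, which is of independent interest (see the Remark following the Lemma). One small point worth making explicit in your write--up: the assertion that $c_1(L)^n$ is a positive multiple of the fibre class $[F]$ is immediate when $B$ is smooth, but for a general normal projective base one should say a word (e.g.\ by intersecting $n$ generic members of $|A|$ to obtain a $0$--cycle supported in the smooth locus of $B$, whose preimage is a positive sum of smooth fibres homologous to $F$).
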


\begin{proof} \begin{enumerate}
\item By \cite[Prop 1.2, Lemma 1.5]{Voi92} the space
$$ D_F \ \coloneqq \ \left\{ t \in \Def(X) \ | \ \text{there exists a deformation } \kf_t \subset \gX_t \text{ of } F \right\} \, .$$
is a complex submanifold of $\Def(X)$ and for its codimension in $\Def(X)$ one has $\codim D_F = \rk r$.

Let $L$ be the pullback of a very ample line bundle on $B$ by $f$. Then $\Def(X,L) \subset D_F$: By Theorem \ref{matsh} we have a family $\zeta : \gX_L \rightarrow P := \IP(\pi_{\star}L)$ of Lagrangian fibrations over $\Def(X,L)$ where $\pi : \gX_L \rightarrow \Def(X,L)$ is the universal family of the pair $(X,L)$ such that $\zeta_0 = f$. Let $F = X_{t_0}$ be the fiber over the point $t_0 \in B = P_0$. Then we can choose a neighbourhood $U$ of $0$ and a local holomorphic section $s : U \rightarrow P$ of the $\IP^n$--fibration $\gX_L \rightarrow P$ such that $s(0) = t_0$. Then the fiber product $\kf \coloneqq U \times_P \gX_L$ gives a deformation $\pr_U : \kf \rightarrow U$ of $\kf_0 = F$ hence $U \subset \Def(X,L)$ \ie $\Def(X,L) \subset D_F$ as germs. For the codimensions in $\Def(X)$ we have in particular
$$1 = \ \codim \Def(X,L) \ \geq \ \codim D_F \ = \ \rk r \ \geq \ 1$$ as a Kähler class on $X$ restricts to a nontrivial element in $H^2(F,\CC)$. We conclude that $\rk r = 1$.

\item By (i) we have also $\rk( r : H^{1,1}(X,\IR) \rightarrow H^{1,1}(F,\IR)) = 1$. As $\kk_X$ is open in $H^{1,1}(X,\IR)$ it follows that $\dim G = 1$. Since restrictions of Kähler classes are still Kähler classes, $G$ is a ray. By Proposition \ref{restriction} $G$ contains integral points. \end{enumerate}\end{proof}

\begin{rem} \begin{enumerate}
\item If $\pi : \gX \rightarrow \Def(X)$ denotes the Kuranishi family then the local system $R^2 \pi_{\star} \CC_{\gX}$ is trivial as we assume $\Def(X)$ to be simply connected. By Ehresmann's theorem we can choose a differentiable trivialization 
$$  \xymatrix{ \xymatrixcolsep{3pc} \gX  \ar[dr] & & X \times \Def(X) \ar[ll]_{\rho} \ar[dl] \\ 
										&	\Def(X) & 		   }  $$
where we denote by $\rho_t \coloneqq \rho|_{X} : X \rightarrow \gX_t$ the associated fiber diffeomorphism. Further we can choose a relative holomorphic form $\sigma$ \ie a section of $\Omega^2_{\gX/\Def(X)}$ such that the restriction $\sigma_t \coloneqq \sigma|_{\gX_t}$ is a holomorphic symplectic form on $\gX_t$. Then the space $D_F$ in the proof of Lemma \ref{ray} can also be defined as
$$ D_F \ = \ \left\{ t \in \Def(X) \ | \ r[\rho_t^{\star} \sigma_t] = 0 \in H^2(F,\CC) \right\} \, , $$
see \cite[Thm 0.1]{Voi92}. 
 \item From the proof it also follows that $D_F = \Def(X,L)$ as germs as $D_F$ is irreducible and contained in $\Def(X,L)$, but both have codimension one in $\Def(X)$.
\end{enumerate} \end{rem}

Let $\Delta \subset B$ be the \emph{discriminant locus} of the Lagrangian fibration $f : X \rightarrow B$, which is by definition the set parametrizing the singular fibers. Note that in general $\Delta$ is a reducible hypersurface in $B$, see \cite[Prop 3.1]{HO09}. Then $B^{\circ} \coloneqq B - \Delta$ is a connected open subset and the restriction $g \coloneqq f|_{f^{-1}(B^{\circ})} : f^{-1}(B^{\circ}) \rightarrow B^{\circ}$ is a proper holomorphic submersion. Let $\kc^{\infty}_{B^\circ}$ denote the sheaf of smooth real functions on $B^\circ$. By Ehresmann's theorem $\kh \coloneqq R^2 g_{\star} \IR \otimes \kc^{\infty}_{B^{\circ}}$ is a differentiable real vector bundle on $B^{\circ}$ which comes with a canonical flat connection $\nabla$ called the Gauss--Manin connection, see \cite[9.2.1]{voih1}.

For each $t \in B^{\circ}$ consider the restriction $r_t : H^2(X,\IR) \rightarrow H^2(X_t,\IR)$ where $X_t \coloneqq g^{-1}(t)$. Set $\kg_t \coloneqq r_t(\kk_X) \cap H^2(X_t, \IZ)$. By Lemma \ref{ray} $\kg_t$ is a non--empty semigroup of rank one. We can define a map \begin{equation}\label{alpha}\alpha \ : \ B^{\circ} \longrightarrow \kh \ \text{ such that } \ \alpha(t) \in \kg_t \subset H^2(X_t, \IR)\end{equation} is the unique integral and primitive element in $\kg_t$ for all $t \in B^{\circ}$.

\begin{prop}\label{alphaconst} The $\kg_t$ form a local system $\kg$ of semigroups on $B^{\circ}$. The map $\alpha : B^{\circ} \rightarrow \kh$ is continuous, and thus can be considered as a section of $\kg$. \end{prop}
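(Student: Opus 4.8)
The plan is to show two things: first, that the assignment $t \mapsto \kg_t$ is locally constant as a sub-local-system of $R^2 g_\star \IZ$ inside $\kh$, and second, that $\alpha$ is then forced to be a flat section of this local system, from which continuity is immediate. Throughout I would work over a small simply connected open $V \subset B^\circ$ on which the local system $R^2 g_\star \IZ$ is trivial, choose a trivialization, and identify $H^2(X_t,\IZ)$ with a fixed lattice $M$ and $H^2(X_t,\IR)$ with $M_\IR$ in a way compatible with the Gauss--Manin connection $\nabla$.

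First I would pin down what $\kg_t$ is. By Lemma \ref{ray} the restriction $r_t : H^2(X,\IR) \to H^2(X_t,\IR)$ has rank one, its image is a line $\ell_t \subset H^2(X_t,\IR)$, and $\kg_t = r_t(\kk_X) \cap H^2(X_t,\IZ)$ is the semigroup of nonnegative integral multiples of a primitive generator. Since the line $\ell_t$ is the image of the fixed space $H^2(X,\IR)$ under the restriction map, and the restriction maps $r_t$ vary continuously in $t$ (they are induced by the inclusions of the fibers, compatibly with the trivialization of $\kh$), the line $\ell_t \subset M_\IR$ varies continuously. But a continuously varying line in $M_\IR$ that meets the integral lattice $M$ in a rank-one subgroup for every $t$ must be locally constant: the set of lines through a given lattice point is discrete in the relevant Grassmannian sense, or more concretely, fix $t_0$, let $v_0 \in M$ be the primitive generator of $\ell_{t_0} \cap M$; then for $t$ near $t_0$ the line $\ell_t$ is close to $\IR v_0$ and still contains a nonzero lattice vector, forcing $\ell_t = \IR v_0$ since the only lattice vectors near the ray $\IR v_0$ are the integer multiples of $v_0$. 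Hence $\ell_t$ is locally constant, so $\kg_t$ is the same semigroup $\IZ_{\geq 0} v_0$ for all $t$ in a neighbourhood of $t_0$; glueing these gives the local system $\kg$, which is a sub-local-system of $R^2 g_\star \IZ$ since $\nabla v_0 = 0$ (a constant section in the trivialization is flat).

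Once $\kg$ is a local system, $\alpha(t)$ is by its very definition \eqref{alpha} the unique primitive generator of $\kg_t$ with the correct sign, i.e. the one lying in $r_t(\kk_X)$ rather than in $-r_t(\kk_X)$. In the local trivialization above both choices $\pm v_0$ are flat sections, and $\alpha$ selects, at each $t$, whichever of $\pm v_0$ lies in the (open, varying continuously) cone $r_t(\kk_X)$; since this cone never contains $0$ and varies continuously, the selected sign cannot jump, so $\alpha \equiv v_0$ (say) on the neighbourhood. Therefore $\alpha$ is locally a flat section of $\kg$, hence a global section of $\kg$ as claimed, and in particular continuous.

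The main obstacle is the first step: making rigorous the claim that the continuously varying line $\ell_t$ must be locally constant because it always meets the lattice. The clean way to phrase it is that the rank-one subgroups $\kg_t$, being sections-at-a-point of the \emph{discrete} local system $R^2 g_\star \IZ$, cannot deform continuously except by the monodromy action, so on a simply connected $V$ they are literally constant; the only real content is checking that $\kg_t$ indeed extends to a section over $V$, which is exactly the continuity of the restriction maps $r_t$ together with Lemma \ref{ray}(ii) guaranteeing nonemptiness and rank one at every point. I would also remark that one could instead invoke that $D_F = \Def(X,L)$ and transport the statement from the Kuranishi base, but the direct argument over $B^\circ$ via the Gauss--Manin connection seems cleanest.
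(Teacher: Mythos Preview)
Your overall strategy matches the paper's, but the step you flag as ``the main obstacle'' really is a gap as written. The claim that ``the only lattice vectors near the ray $\IR v_0$ are the integer multiples of $v_0$'' is false without a bound on the norm: in $\IZ^2$, for instance, the primitive vector $(N,1)$ lies on a line arbitrarily close (in $\IP^1$) to $\IR(1,0)$ as $N\to\infty$. So mere continuity of $t\mapsto \ell_t$ together with the fact that each $\ell_t$ hits the lattice does not, by the argument you give, force local constancy. (One can rescue this by observing that the rational lines form a countable, hence totally disconnected, subspace of the Grassmannian, so a continuous map from connected $V$ into it is constant --- but you do not say this, and it is still indirect.)

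The clean fix, which is exactly what the paper does, is to notice that the restriction maps are not merely continuous but \emph{flat}: for each fixed $\omega\in H^2(X,\IR)$ the section $t\mapsto \omega|_{X_t}$ of $\kh$ satisfies $\nabla(\omega|_{X_\bullet})=0$. This is the Cartan--Lie formula (equivalently, homotopy invariance of the fibre inclusions in a local $C^\infty$ trivialisation). Hence in your trivialisation over $V$ the line $\ell_t$ is literally constant, not just continuously varying, and so is $\kg_t=\ell_t\cap M$. The image of the family $\varphi:\kk_X\times B^\circ\to\kh$, $(\omega,t)\mapsto\omega|_{X_t}$, thus sits inside the sheaf of flat sections, and $\kg=\im\varphi\cap R^2g_\star\IZ$ is a local system on the nose. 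Your second step (selecting the primitive generator with the correct sign) then goes through unchanged. In short: replace ``the restriction maps vary continuously'' by ``the restriction of a global class is Gauss--Manin flat'', and the proof is complete and coincides with the paper's.
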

\begin{proof} Consider the family of sections
\begin{equation}\label{phi}\varphi : \kk_X \times B^{\circ} \rightarrow \kh \, , \ \ (\omega, t) \mapsto \omega|_{X_t} \, .\end{equation}
Then the image of $\varphi$ is the union of rays in each $H^2(X_t,\IR)$ considered in Lemma \ref{ray} containing integral points. Note that the family $\varphi$ is differentiable as for each $\omega \in \kk_X$ the corresponding section $\varphi(\omega, \cdot)$ is differentiable. More precisely it is flat, \ie $\nabla \varphi(\omega, \cdot) = 0$ for each $\omega \in \kk_X$ which follows from the Cartan--Lie formula, see \cite[9.2.2]{voih1}. 

Let $\kh^{\nabla}$ be the sheaf of flat sections of $\kh = R^2 g_{\star} \IR \otimes \kc^{\infty}_{B^{\circ}}$. As $\varphi$ is a flat family the image $\im \varphi$ is a local system of semigroups which is contained in $\kh^{\nabla}$. Then define $\kg \coloneqq \im \varphi \cap R^2g_{\star} \IZ$ which is in a canonical way a local system whose stalks are given precisely by $\kg_t$. 

Take an open cover $B^{\circ} = \cup_i U_i$ such that $\kg$ is trivial on each $U_i$ say $\kg(U_i) = G$ for all $i$ where $G \coloneqq \kg_t$ for a fixed $t$. For each $i$ the restriction $\alpha|_{U_i}$ is the unique primitive element in $G$. They glue to an unique global section of $\kg$ which is precisely $\alpha$. Hence $\alpha$ is continuous as a section of the local system $\kg$ and in particular as a map $B^{\circ} \rightarrow \kh$. \end{proof}

Clearly $\alpha(t) \in H^2(X_t,\IZ)$ defines a polarization on the abelian variety $X_t$ for every $t \in B^{\circ}$. To any polarization on an abelian variety one can associate a tuple of positive integers which is called the \emph{polarization type}, see \cite[p. 70]{BL}, in the following way. 

Since $X_t$ is an abelian variety we have an identification $H^2(X_t,\IZ) = \wedge^2 H_1(X_t,\IZ)^{\vee}$, see \cite[Cor. 1.3.2]{BL}. We can interpret $\alpha(t) : \Lambda_t \otimes \Lambda_t \rightarrow \IZ$ as an alternating integral form on the lattice $\Lambda_t \coloneqq H_1(X_t,\IZ)$. By the elementary divisor theorem we can find a basis of $\Lambda_t$ for which $\alpha(t)$ has the form
$$ \alpha(t) \ = \ \begin{pmatrix} \phantom{-}0 & D \\ -D & 0 \end{pmatrix}  $$ 
where $D = \text{diag}(\lambda_1, \ldots \lambda_n)$ is an integral diagonal matrix with $\lambda_i > 0$ and $\lambda_i | \lambda_{i+1}$. The tuple $$\pol(f,t) \ \coloneqq \ (\lambda_1, \ldots, \lambda_n)$$ is called the \emph{polarization type} of $\alpha(t)$ and a priori depends on $t \in B^{\circ}$. We also use the notation $\pol(L)$ for the type of a polarization $L$ on an abelian variety in the classical sense \cite[p. 70]{BL} \ie $\pol(f,t) = \pol(\alpha(t))$.

\begin{prop}\label{polcon} The polarization type $\pol(f, \cdot) : B^{\circ} \rightarrow \IZ^n$ is constant. \end{prop}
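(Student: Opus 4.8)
The plan is to exploit the local system structure established in Proposition \ref{alphaconst}. Recall that $\alpha : B^\circ \to \kh$ is a global section of the local system $\kg \subset R^2 g_\star \IZ$, and in particular $\alpha(t)$ is, for every $t \in B^\circ$, a flat section: it is invariant under the Gauss--Manin connection $\nabla$, equivalently under the monodromy action of $\pi_1(B^\circ, t_0)$ on $H^2(X_{t_0}, \IZ)$. The polarization type $d(f,t) = (\lambda_1, \dots, \lambda_n)$ is computed purely from the alternating form $\alpha(t)$ on the lattice $\Lambda_t = H_1(X_t,\IZ)$ via the elementary divisor theorem, so it is an invariant of the $\mathrm{GL}(\Lambda_t,\IZ)$--orbit of $\alpha(t)$. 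Hence the strategy is: show $d(f,\cdot)$ is locally constant on the connected space $B^\circ$, and conclude it is constant.

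First I would fix $t_0 \in B^\circ$ and a contractible (e.g. polydisc) neighbourhood $U \ni t_0$ on which $R^2 g_\star \IZ$ is trivialized, say by a flat frame; simultaneously this trivializes $R_1 g_\star \IZ$, giving a locally constant identification $\Lambda_t \cong \Lambda_{t_0}$ for $t \in U$ compatible with the perfect pairing $H^2(X_t,\IZ) = \wedge^2 \Lambda_t^\vee$ of \cite[Cor.~1.3.2]{BL}. Under this identification the section $\alpha|_U$, being flat by Proposition \ref{alphaconst} (it is a section of the local system $\kg$, whose stalks are discrete, so over the connected $U$ it is genuinely constant in the flat frame), corresponds to a single fixed alternating form $\alpha_0$ on $\Lambda_{t_0}$. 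Applying the elementary divisor theorem to this one form $\alpha_0$ gives one tuple $(\lambda_1,\dots,\lambda_n)$, and by construction $d(f,t) = (\lambda_1,\dots,\lambda_n)$ for every $t \in U$ — because the identification $\Lambda_t \cong \Lambda_{t_0}$ is a lattice isometry (indeed a $\IZ$--module isomorphism), and the polarization type is invariant under such isomorphisms carrying $\alpha(t)$ to $\alpha_0$. Thus $d(f,\cdot)$ is constant on $U$.

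Since $t_0$ was arbitrary, $d(f,\cdot) : B^\circ \to \IZ^n$ is locally constant. The base of the smooth part, $B^\circ = B - \Delta$, is connected (this was already noted before the statement, since $\Delta$ is a hypersurface in the irreducible normal projective variety $B$), so a locally constant function on it is constant, which is the claim. The only point requiring a little care is the compatibility between the trivialization of $R^2 g_\star \IZ$ used in Proposition \ref{alphaconst} and the trivialization of $R_1 g_\star \IZ$ needed to speak of the polarization type: one must check that the flat frame of $R^2 g_\star \IZ$ is (up to the canonical isomorphism $H^2 = \wedge^2 H^1{}^\vee$) induced by a flat frame of $R_1 g_\star \IZ$. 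This is immediate from functoriality of the Gauss--Manin connection under the cup product / exterior power operations, so there is no real obstacle; the argument is essentially the observation that ``polarization type is a monodromy invariant of a flat section.''
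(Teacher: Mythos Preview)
Your proof is correct and follows essentially the same line as the paper's: use that $\alpha$ is a flat (continuous) section by Proposition \ref{alphaconst}, conclude that the elementary divisors are locally constant, and finish with connectedness of $B^\circ$. The paper's version is terser, simply asserting that the correspondence $t \mapsto d(f,t)$ is continuous and integer--valued; your explicit use of simultaneous flat trivializations of $R_1 g_\star \IZ$ and $R^2 g_\star \IZ$ just spells out what that continuity claim means.
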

\begin{proof} By construction for $t \in B^{\circ}$ the associated tuple $\pol(f,t)$ is the diagonal of one of the blocks of the representation matrix of $\alpha(t) : \Lambda_t \times \Lambda_t \rightarrow \IZ$ with respect to a chosen basis $b_1(t), \ldots, b_{2n}(t)$ of the lattice $H_1(X_t,\IZ)$. This correspondence is continuous and since $\pol(f, \cdot)$ is integer valued in each component it is locally constant hence constant as $B^{\circ}$ is connected. \end{proof}

\begin{defin} For each Lagrangian fibration $f : X \rightarrow B$ the associated tuple $\pol(f)$ in $\IZ^n$ is called the \emph{polarization type} of $f$.\end{defin}

\begin{theo}\label{samedef} The polarization type stays constant in a family of Lagrangian fibrations. In particular two Lagrangian fibrations which are deformation equivalent as Lagrangian fibrations have the same polarization type. \end{theo}
\begin{proof} The proof is similar to the one of Proposition \ref{alphaconst}. Let $\phi : \kX \rightarrow P$ be a family of Lagrangian fibrations parametrized by a complex space $S$. Setting $\kb \coloneqq \bigcup_{s\in S} B^{\circ}_{s}$ where as before $B^{\circ}_s \coloneqq P_s - \Delta_s$ is the base of the Lagrangian fibration $\phi_s \coloneqq \phi|_{\kX_s} : \kX_s \rightarrow P_s$ without the discriminant locus. Note that $\kb$ is connected as it is $P$ without a real codimension two subset. Set $\psi \coloneqq \phi|_{\pi^{-1}(\kb)} : \phi^{-1}(\kb) \rightarrow \kb$ which is a holomorphic submersion.  

With same argument as in Proposition \ref{alphaconst} we get a section $A : \kb \rightarrow R^2 \psi_{\star} \IZ$ such that for $t \in B_s$ the value $A(t)$ coincides with $\alpha_s(t)$, where $\alpha_s$ is the continuous map $\alpha_s : B_s^{\circ} \rightarrow \kh_s$ as in \eqref{alpha} which is associated to the Lagrangian fibration $\phi_s$. Let $\pol(A(t))$ denote the polarization type of the polarization $A(t)$ on the abelian variety $(\kX_s)_t$ for $t \in B^{\circ}_s$. As $\kb$ is connected the continuous map $\pol(A(\cdot)) : \kb \rightarrow \IZ^n$ must be constant. Since $\pol(\phi_s) = \pol(\alpha_s(t)) = \pol(A(t))$ for $t \in B^{\circ}_s$ we see that $\pol(\phi_s)$ is constant on $S$. \end{proof} 

\begin{prop}\label{specialpol} Let $f : X \rightarrow B$ be a Lagrangian fibration and $\omega$ a special Kähler class with respect to a smooth fiber $F$. Then $\pol(f)$ is given by the polarization type of $\omega|_F$ \ie $\pol(f) = \pol(\omega|_F)$. \end{prop}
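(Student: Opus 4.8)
The plan is to show that $\omega|_F$ is exactly the value, at the relevant point, of the canonical section $\alpha$ constructed in Proposition~\ref{alphaconst}, after which the claim is immediate from the definition of $d(f)$. Write $F = X_{t_0}$ for the corresponding point $t_0 \in B$; since $F$ is smooth we have $t_0 \in B^{\circ} = B - \Delta$, so the constructions preceding Proposition~\ref{polcon} are available at $t_0$. Recall from Lemma~\ref{ray} that the restriction $G := r_{t_0}(\kk_X) \subset H^{1,1}(X_{t_0},\IR)$ is a ray containing integral points, and that by definition $\alpha(t_0)$ is the unique primitive integral element of the rank-one semigroup $\kg_{t_0} = G \cap H^2(X_{t_0},\IZ)$.

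Next I would check that $\omega|_F = \alpha(t_0)$. Since $\omega$ lies in $\kk_X$, its restriction $r_{t_0}(\omega) = \omega|_F$ lies in $G$; and since $\omega$ is special with respect to $F$, this restriction is integral and indivisible in $H^2(F,\IZ)$. Thus $\omega|_F$ is a primitive integral element of $\kg_{t_0}$. But the integral points of the line $\IR\cdot\omega|_F$ form a rank-one lattice, so this line contains exactly two primitive integral vectors, opposite to one another; as $G$ is only a half-line, precisely one of them lies in $\kg_{t_0}$. Hence the primitive integral element of $\kg_{t_0}$ is unique and is equal to both $\omega|_F$ and $\alpha(t_0)$, so these coincide.

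Finally, by the definition of the polarization type, the polarization type of $\alpha(t_0)$ is $d(f,t_0)$, which equals $d(f)$ by Proposition~\ref{polcon}. Therefore $d(f)$ is the polarization type of $\omega|_F$, as asserted. There is essentially no serious obstacle here; the only point demanding a moment's care is the uniqueness just used — that $\omega|_F$ and $\alpha(t_0)$ cannot differ by a sign — which holds because both are restrictions of honest Kähler classes and hence lie in the same half-line $G$, not merely on the same line through the origin.
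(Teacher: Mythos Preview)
Your proof is correct and follows essentially the same approach as the paper's own proof, which simply observes that $\omega|_F$ lies in the ray $G$ of Lemma~\ref{ray} and, being primitive, must equal $\alpha(t)$. Your version is more detailed---in particular you make explicit the sign/uniqueness issue that the paper leaves implicit---but the underlying argument is identical.
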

\begin{proof} As $\omega|_F$ is the restriction of a Kähler class it is contained in the ray $G$ of Lemma \ref{ray}. Since $\omega|_F$ is primitive it is in the image of $\alpha : B^{\circ} \rightarrow \kh$ of \eqref{alpha} \ie $\alpha(t) = \omega|_F$ for $F = X_t$. \end{proof}
						
\begin{exam} Let $f : S \rightarrow \IP^1$ be an elliptic K$3$ surface and $\omega$ a special Kähler class on $S$ with respect to a smooth fiber $F$. As $F$ is an elliptic curve we have $H^2(F,\IZ) \cong \IZ$. Since $\omega|_F$ is primitive it is the generator of $H^2(F,\IZ)$ and so $\omega|_F = c_1(L)$ for an ample line bundle of degree $\deg(L) = 1$. By the Proposition above and \cite[Lem. 3.6.4, Lem. 3.6.5]{BL} we have $$ \pol(f) \ = \ \pol(\omega|_F) \ = \ \int_F c_1(L) \ = \ \deg(L) \ =\ 1$$ as one can identify the degree with integration of the first Chern class.
\end{exam}

\begin{theo}\label{same} The associated Lagrangian fibrations of two marked pairs which define points in the same connected component $\gU^{\circ}_{\lambda^\bot}$ of the moduli of Lagrangian fibrations of $\kdrei$--type for a primitive isotropic class $\lambda$ in the $\kdrei$--type lattice have the same polarization type.
\end{theo}
\begin{proof} By Proposition \ref{defo} the associated Lagrangian fibrations are deformation equivalent and the claim follows by Proposition \ref{samedef}.\end{proof}

A very optimistic conjecture is the following.

\begin{conj}\label{conjpol} Let $f_i : X_i \rightarrow B_i$, $i = 1, 2$, be two Lagrangian fibrations such that $X_1$ and $X_2$ are deformation equivalent. Then $\pol(f_1) = \pol(f_2)$. \end{conj}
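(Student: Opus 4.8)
The plan is to attempt Conjecture \ref{conjpol} by reducing it to a connectedness statement for moduli of Lagrangian fibrations and then running, for each of the four known deformation types, the same two-step machine that proves the $\kdrei$-type case (Theorem \ref{main}). By Proposition \ref{specialpol} and Theorem \ref{samedef} the polarization type is a deformation invariant of the fibration, and by Hwang's theorem the base is always $\IP^n$, so the conjecture is equivalent to the assertion that, \emph{within one deformation type of irreducible holomorphic symplectic manifolds}, all Lagrangian fibrations share the same polarization type; it then remains to compute that common value.

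First I would fix a deformation type with second-cohomology lattice $\Lambda$ and globalize Matsushita's local picture exactly as in Section \ref{modulioflagrangian}: by Theorem \ref{matsh} every Lagrangian fibration $f:X\to\IP^n$ with $L=f^{\star}\ko_{\IP^n}(1)$ sits in a family over $\Def(X,L)$, and gluing these yields a moduli space $\gH_\Lambda$ of Lagrangian fibrations, a hypersurface in $\gM_\Lambda$. The decisive task is the analogue of Theorem \ref{modulil}: for each primitive isotropic $\lambda\in\Lambda$, the locus $\gU^0_{\lambda^\bot}$ of marked pairs $(X,\eta)$ carrying a Lagrangian fibration with $\eta(c_1(L))=\lambda$ should be open, nonempty, and \emph{connected}. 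Granting this, Theorem \ref{same} already shows the polarization type is constant on each $\gU^0_{\lambda^\bot}$.

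Second, one must understand the monodromy orbits of primitive isotropic classes under $\Mon^2$ — these index the components $\gU^0_{\lambda^\bot}$ — and exhibit in each orbit one explicit Lagrangian fibration whose polarization type can be read off. In the $\kdrei$-type case the Beauville--Mukai systems of Section \ref{beauvillemukai} do both at once and have principal polarization type; for the generalized Kummer type and for the two exceptional O'Grady types one would need the corresponding explicit families (arising from abelian-surface geometry, respectively from the known constructions on $\mathrm{OG}_6$ and $\mathrm{OG}_{10}$) together with a computation of their polarization type. If all of these reference fibrations turn out to be principally polarized, Conjecture \ref{conjpol} follows; more modestly, even a single nonprincipal example in one component would refute it.

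The main obstacle is precisely that neither ingredient is available outside the $\kdrei$-type world. The connectedness of $\gU^0_{\lambda^\bot}$ rests, for $\kdrei$-type, on Markman's structure theory — notably the equivalence of nefness and basepoint-freeness for a primitive isotropic class (\cite[Thm. 1.3]{eyal2}) and on sufficiently fine control of $\Mon^2$ — and there is as yet no complete counterpart for the generalized Kummer or O'Grady lattices. Moreover, even with connectedness in hand there is no a priori reason the polarization type should be principal, or even constant across deformation types: the natural fibrations on generalized Kummer manifolds need not carry principal polarizations, which is exactly why the conjecture is flagged as "possibly too optimistic". A realistic intermediate goal is to carry out the above program for generalized Kummer fourfolds $K_2(A)$, where both the lattice $\Lambda$ and the natural Lagrangian fibrations are well enough understood to decide whether the polarization type is forced.
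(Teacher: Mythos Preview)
The statement under review is Conjecture \ref{conjpol}, and the paper does \emph{not} prove it: it is explicitly presented as ``a very optimistic conjecture'', and only the special case where the $X_i$ are of $\kdrei$--type is established (Theorem \ref{main}). There is therefore no ``paper's own proof'' to compare against.

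Your proposal is, accordingly, not a proof but a research program, and you are transparent about this. The strategy you outline --- globalize Matsushita's deformation space into connected components $\gU^0_{\lambda^\bot}$ indexed by $\Mon^2$--orbits of primitive isotropic classes, then plant in each component an explicit fibration whose polarization type is computable --- is exactly the architecture the paper uses to settle the $\kdrei$--type case in Section \ref{kdrei}. You also correctly identify the two missing ingredients outside the $\kdrei$--type world: (a) the analogue of \cite[Thm.~1.3]{eyal2} needed to prove openness and connectedness of $\gU^0_{\lambda^\bot}$, and (b) the supply of reference fibrations (the analogues of Beauville--Mukai systems) hitting every monodromy orbit. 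Neither is currently available for the generalized Kummer or O'Grady types, so the program is genuinely incomplete, as you say.

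One small overreach: you invoke Hwang's theorem to conclude the base is always $\IP^n$, but that theorem (and its extension in \cite{LeGr13}) requires the base $B$ to be smooth, whereas Conjecture \ref{conjpol} is stated for arbitrary normal bases $B_i$. This does not affect the definition of $d(f)$, which only uses smooth fibers, but it does mean your reduction to the moduli space $\gU^0_{\lambda^\bot}$ (built from fibrations over $\IP^n$) would a priori miss fibrations over singular bases. You would need either to assume smooth base throughout, or to argue separately that such fibrations deform to ones over $\IP^n$.
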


As every known irreducible holomorphic symplectic manifold can be deformed to one which admits a Lagrangian fibration this conjecture would basically give a new deformation invariant. In section \ref{kdrei} we verify this conjecture for manifolds of $\kdrei$--type. 

\section{Beauville--Mukai systems}\label{beauvillemukai}

This section has the purpose to recall the notion of Beauville--Mukai systems which are important examples of $\kdrei$--type fibrations and to determine their polarization type. \\

Let $S$ be a projective K$3$ surface and let $H^{\bullet}(S)$ denote the \emph{Mukai lattice} \ie 
$$ H^{\bullet}(S) \ \coloneqq \ H^0(S,\IZ) \oplus H^2(S, \IZ) \oplus H^4(S,\IZ)$$
together with the bilinear form defined by $(v,w) \coloneqq (v_2,w_2) - \int_S (v_0 \wedge w_4 + v_4 \wedge w_o)$ where $(v_2, w_2) = \int_S v_2 \wedge w_2$ denotes the intersection form on $H^2(S,\IZ)$ and $v = v_0 + v_2 + v_4$ with $v_{i} \in H^i(S,\IZ)$ the decomposition in $H^{\bullet}(S)$ and similarly for $w$. This lattice is even, unimodular, of rank $24$ and isometric to
\begin{equation}\label{mukailattice} \tilde{\Lambda} \ \coloneqq \ \Lambda_{\text{K}3} \oplus U \ = \ E_8(-1)^{\oplus 2} \oplus U^{\oplus 4} \end{equation}
where $\Lambda_{\text{K}3} \cong H^2(S,\IZ)$ is the the K$3$ lattice, $E_8(-1)$ the negative definite root lattice of type $E_8$ and $U$ the unimodular rank two hyperbolic lattice. We identify $H^4(S,\IZ) = \IZ$ where we use the Poincare dual to a point as a generator and similarly $H^0(S,\IZ) = \IZ$ by taking the Poincare dual of $S$.

 A \emph{Mukai vector} is a tuple $v = (r,l,s) \in H^0(S,\IZ) \oplus H^{1,1}(S,\IZ) \oplus H^4(S,\IZ)$ such that $r \geq 0$ and $l$ is effective if $r = 0$. For a coherent sheaf $\kf \in \coh(S)$ set $v(\kf) \coloneqq \ch(\kf)\sqrt{\td(S)} \,$ which is a Mukai vector as easily verified\footnote{Note that $v(\kf) = (\rk(\kf), c_1(\kf), \chi(\kf) - \rk(\kf) = (\rk(\kf), c_1(\kf), \frac{1}{2} c_1(\kf)^2 - c_2(\kf) + \rk(\kf))$ as $\sqrt{\td(S)} = 1 + \omega$ where $\omega$ denote the Poincare dual of a point since we consider sheaves on a K$3$ surface.}.
 
 By \cite[4.C]{huylehn} for each Mukai vector $v$ there is a distinguished countable system of hyperplanes called \emph{$v$--walls} in the the real ample cone $A(S) \otimes \IR$ which is locally finite. An ample divisor $H$ on $S$ is called \emph{$v$--generic} if it lies outside the union of such $v$--walls. 
 
 Choose a Mukai vector $v$ together with a $v$--generic ample class $H$ and consider the moduli space $M_H^{s}(v)$ of $H$--stable pure coherent sheaves $\kf$ on $S$ with $v(\kf) = v$. Then general results of many authors, see for instance \cite{mukais} and \cite{yoshi}, imply that $M_H^{s}(v)$ is a holomorphic symplectic manifold of dimension $2n \coloneqq (v,v) + 2$. It is not necessarily compact. To compactify one needs to add semistable sheaves but under the assumption that $v$ is primitive any $H$--semistable sheaf $\kf$ with $v(\kf) = v$ is automatically $H$--stable and we denote the moduli space in this case just by $M_H(v)$ which is a $\kdrei$--type manifold, see \cite[Prop. 4.12]{yoshi}. 
\begin{exam} Let $S$ be a projective K$3$ surface, and $\kf$ a sheaf on $S$ such that $C \coloneqq \supp(\kf)$ is a smooth irreducible curve of genus $g$ with $\rk(\kf|_C) = 1$. Then $\rk(\kf) = 0$ and $c_1(\kf) = c_1(\ko_S(C))$. We have $H^i(C, \kf|_C) = H^i(S, \kf)$ (see this for instance with {\v C}ech cohomology), hence $\chi(\kf|_C) = \chi(\kf)$. The Riemann--Roch for curves gives $$v(\kf) \ = \ (0, c_1(\kf), \chi(\kf) - \rk(\kf)) \ = \ (0, C, 1 - g + d)$$ where $d$ denotes the degree of the restriction of $\kf$ to $C$. \end{exam}
Let $v$ be a primitive Mukai vector on $S$ of the form $v = (0, c_1(D), s)$ where $D$ is a big and nef divisor on $S$. Note that we have $h^0(S,D) = \frac{1}{2} (D,D) + 2 = n + 1$. Choose a $v$--generic ample class $H$ on $S$, hence $M_H(v)$ is an irreducible holomorphic symplectic manifold. It comes with a natural Lagrangian fibration in the following way. 

The space $M_H(v)$ parametrizes sheaves $\kf$ on $S$ with $v(\kf) = v$ \ie $\kf$ is of rank zero with Chern class $c_1(\kf) = c_1(D)$. In particular $\kf$ is supported on a divisor which is an element of $|D| \cong \IP^n$. Then we can define
$$\pi : M_H(v) \longrightarrow |D|^{\star}\, , \ \ \ \kf \longmapsto \supp \kf$$
to obtain a holomorphic map $M_H(v) \rightarrow |D|^{\star}$. Since $M_H(v)$ is irreducible holomorphic symplectic $\pi$ is a Lagrangian fibration by Matsushita's Theorem \ref{mats}. If $C$ is a smooth curve and an element of $|D|$ then the fiber $\pi^{-1}(C)$ is the Jacobian of the curve $C$ by construction. 

\begin{defin}Let $S$ be a projective K$3$ surface. Fix a big and nef divisor $D$ on $S$, a primitive Mukai vector of the form $v = (0, c_1(D), s)$ and a $v$--generic polarization $H$. The Lagrangian fibration $\pi : M_H(v) \rightarrow |D|^{\star}$, $\kf \mapsto \supp \kf$ as above is called a \emph{Beauville--Mukai system}.\end{defin} 

\begin{rem} 
More classically Beauville--Mukai systems arise from linear systems induced by a smooth curve in $S$ in the following way. Let $C \subset S$ denote an irreducible smooth curve of genus $n$. Under the assumption that $\Pic(S)$ is generated by $\ko_S(C)$ all curves in the linear system $|C|$ are reduced and irreducible. By Riemann--Roch it follows that $|C| = \IP^n$. Let $\kc \rightarrow \IP^n$ denote the associated family of curves. For each $d$, the relative compactified Jacobian $\pi : X \coloneqq \overline{\Pic}^d(\kc/\IP^n) \rightarrow \IP^n$ exists, see \cite[II, 1-4]{souza} or \cite[Thm. 6.6]{compac}. 

Setting $v \coloneqq (0, c_1(C), d + 1 -n)$ there is an identification of $\overline{\Pic}^d(\kc/\IP^n)$ with $M_H(v)$, see \cite[Ex. 0.5]{mukais}, given by the following map. For a pair $(\kc_t, \kf)$ representing an element in $X$, consider the inclusion $\iota : \kc_t \hookrightarrow S$. Then associate to it the element $\iota_{\star} \kf \in M_H(v)$. 

In particular one can see $M_H(v)$ as a generalization of the classical definition of a Beauville--Mukai system since the construction with the compactified Picard scheme only works if the linear system contains only reduced and irreducible curves. \end{rem}

\begin{lem}\label{picard} Let $A$ be an abelian variety. \begin{enumerate}
\item If $\End(A) = \IZ$ then its Picard number is $\rho(A) = 1$.
\item If $A = \Jac(C)$ is a Jacobian of a smooth curve $C$ and $\rho(A) = 1$ then the primitive polarization $\Theta$ on $A$ is principal \ie $\pol(\Theta) = (1, \ldots, 1)$.
\end{enumerate} \end{lem}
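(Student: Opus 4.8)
The statement is Lemma \ref{picard}, which has two parts about an abelian variety $A$. For part (i), the plan is to recall the standard correspondence between the Néron--Severi group and symmetric endomorphisms. Concretely, a polarization $H$ on $A$ induces the Rosati involution $\varphi \mapsto \varphi^\dagger$ on $\End(A)_\IQ$, and the map $\NS(A)_\IQ \to \End(A)_\IQ$, $L \mapsto \varphi_H^{-1} \circ \varphi_L$, is an isomorphism onto the subspace of Rosati-fixed elements, see \cite[Prop. 5.2.1, Thm. 5.2.4]{BL}. If $\End(A) = \IZ$ then $\End(A)_\IQ = \IQ$, every element is fixed by any involution, so $\NS(A)_\IQ \cong \IQ$ and hence $\rho(A) = \rk \NS(A) = 1$. (One should note that $A$ is automatically assumed projective here, since a polarization exists by hypothesis in the applications; alternatively invoke that $A$ is a fiber of a Lagrangian fibration and so projective by Theorem \ref{mats}.)

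For part (ii), assume $A = \Jac(C)$ and $\rho(A) = 1$. The Jacobian carries its canonical principal polarization $\Theta$, the theta divisor, with $c_1(\ko_A(\Theta))$ primitive of type $(1,\ldots,1)$. Since $\rho(A) = 1$, the group $\NS(A) \cong \IZ$ is generated by a single primitive ample class $h_0$, and $c_1(\ko_A(\Theta)) = m\, h_0$ for some positive integer $m$. Because $c_1(\ko_A(\Theta))$ is itself primitive in $\NS(A)$ — as the type $(1,\ldots,1)$ forces indivisibility in $H^2(A,\IZ)$, in particular in $\NS(A)$ — we get $m = 1$, so $h_0 = c_1(\ko_A(\Theta))$. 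Hence the unique (up to scaling) primitive polarization class on $A$ is the principal one, which is what the statement asserts.

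The main obstacle — really the only subtlety — is making sure the notion of ``the primitive polarization on $A$'' in the statement matches $\alpha(t)$ from Section \ref{poltypes}, i.e. that primitivity is taken inside $H^2(A,\IZ) = \wedge^2 H_1(A,\IZ)^\vee$ rather than merely inside $\NS(A)$. Since the theta class of a Jacobian is principal, it is indivisible already in $H^2(A,\IZ)$; and under $\rho(A)=1$ any other polarization is a positive multiple of it in $\NS(A) \subset H^2(A,\IZ)$, so the unique primitive one (in either sense, as the two notions coincide once $\NS(A)$ has rank one and contains a class primitive in $H^2$) is the theta class. I would therefore phrase part (ii) as: the generator of $\NS(A)$ that lies in the positive cone is $c_1(\ko_A(\Theta))$, whose elementary-divisor type is $(1,\ldots,1)$ by the theorem of the square / Riemann--Roch on abelian varieties \cite[Sec. 3.6]{BL}. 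Everything else is a direct appeal to the cited structure theory and requires no computation.
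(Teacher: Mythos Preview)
Your proposal is correct and follows essentially the same approach as the paper: for (i) you invoke the identification of $\NS(A)_\IQ$ with a subspace of $\End(A)_\IQ$ via \cite[Prop.~5.2.1]{BL}, exactly as the paper does, and for (ii) you use the existence of the primitive principal theta polarization on a Jacobian together with $\rho(A)=1$ to conclude uniqueness, which is precisely the paper's argument (citing \cite[Prop.~11.1.2]{BL}). Your extra paragraph reconciling primitivity in $\NS(A)$ versus $H^2(A,\IZ)$ is a useful clarification the paper leaves implicit, but it does not change the route.
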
 
\begin{proof} \begin{enumerate} \item By  \cite[Prop. 5.2.1]{BL} there is an isomorphism $\NS(A) \otimes \IQ \cong V$ where $V$ is a $\IQ$--subspace of $\End(A)\otimes \IQ$. The latter has dimension $1$ by assumption hence $\rho(A) = \dim_{\IQ} \NS(A) \otimes \IQ = 1$.
\item It is well known that on every Jacobian of a curve there exists a primitive principal polarization, see \cite[Prop 11.1.2]{BL}. Since $\rho(A) = 1$ it must be unique.
\end{enumerate} \end{proof}

\begin{theo}\label{picardk}\cite[Thm. 1.1, Cor. 1.2]{cili} Let $S$ be a projective K$3$ surface and $V$ a linear system on it. If $C$ is a general element of $V$ and $\Jac(C)$ the Jacobian of $C$ then $\End(\Jac(C)) = \IZ$.\end{theo}
\begin{proof} This follows directly from \cite[Thm. 1.1, Cor. 1.2]{cili} since K$3$ surfaces satisfy $H^1(S,\ko_S) = 0$. One has to note that the condition in \cite[Thm. 1.1]{cili} that $V$ defines a birational map from $S$ to its image can be dropped because the authors only use this to conclude that the pullback morphism 
	$$ \Pic^0(S) \longrightarrow \Pic^0(C) = \Jac(C) $$ 
has finite kernel, see \cite[p. 35, 2.II.]{cili}. Since $S$ is K$3$ we have $\Pic^0(S) = 0$ and so this condition is satisfied. \end{proof}

\begin{cor}\label{prin} The Picard number of the generic smooth fiber of a Beauville--Mukai system $\pi : X \rightarrow |D|^{\star}$ equals one. In particular $\pol(\pi) = (1, \ldots, 1)$. \end{cor}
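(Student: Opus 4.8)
The plan is to chain together the results already assembled. First I would establish the Picard number statement: the generic smooth fiber of a Beauville--Mukai system $\pi : X \rightarrow |D|^\star$ is, by construction, the Jacobian $\Jac(C)$ of a general curve $C$ in the linear system $|D|$ on the K3 surface $S$. By Theorem \ref{picardk} we have $\End(\Jac(C)) = \IZ$, and then Lemma \ref{picard} (i) gives $\rho(\Jac(C)) = 1$. That takes care of the first assertion.

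For the second assertion, I would apply Lemma \ref{picard} (ii): since the generic fiber $A = \Jac(C)$ is a Jacobian with Picard number one, the \emph{primitive} polarization on $A$ is principal, i.e. its polarization type is $(1, \ldots, 1)$. It now remains to connect this to $d(\pi)$. By Proposition \ref{specialpol} (together with Proposition \ref{camp} and Proposition \ref{polcon}), the polarization type $d(\pi)$ is computed from the restriction $\omega|_F$ of any special K\"ahler class $\omega$ to a smooth fiber $F$, and $\omega|_F$ is by definition primitive; equivalently $d(\pi) = d(\alpha(t))$ where $\alpha(t)$ is the unique primitive integral class in the ray $\kg_t$ of Lemma \ref{ray}. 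Since the primitive polarization on $A$ is the principal one, $\alpha(t)$ has polarization type $(1, \ldots, 1)$, hence $d(\pi) = (1, \ldots, 1)$.

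The only mild subtlety — and the step I would be most careful about — is making sure that the primitive integral class $\alpha(t)$ arising from the restriction of the K\"ahler cone of $X$ really is a \emph{polarization} (i.e. the class of an ample line bundle) on the fiber $A = \Jac(C)$, so that Lemma \ref{picard} (ii) applies to it; this is exactly the content of the discussion preceding Proposition \ref{polcon}, where $\alpha(t) \in H^2(X_t, \IZ)$ is noted to define a polarization on $X_t$ because it is the restriction of a K\"ahler class and thus represented by a positive $(1,1)$-form on the abelian variety. Given $\rho(A) = 1$, there is up to scaling a unique polarization class, namely the principal one, and $\alpha(t)$ being primitive it must equal that class. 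Everything else is a direct citation of the earlier results, so the corollary follows immediately.

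\begin{proof} The generic smooth fiber $F$ of a Beauville--Mukai system $\pi : X \rightarrow |D|^\star$ is by construction the Jacobian $\Jac(C)$ of a general curve $C$ in the linear system $|D|$ on the K3 surface $S$. By Theorem \ref{picardk} we have $\End(\Jac(C)) = \IZ$, hence $\rho(\Jac(C)) = 1$ by Lemma \ref{picard} (i). This proves the first claim. For the second claim, recall that by Proposition \ref{camp} there is a K\"ahler class $\omega$ on $X$ which is special with respect to $F$, and by Proposition \ref{specialpol} the polarization type $d(\pi)$ equals the polarization type of $\omega|_F$. Now $\omega|_F$ is a primitive polarization on the abelian variety $F = \Jac(C)$, and since $\rho(F) = 1$ it is the unique primitive polarization on $F$, which by Lemma \ref{picard} (ii) is principal. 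Therefore $d(\pi) = (1, \ldots, 1)$. \end{proof}
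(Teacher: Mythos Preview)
Your proof is correct and follows essentially the same argument as the paper's own proof: first Theorem \ref{picardk} combined with Lemma \ref{picard} (i) yields $\rho(F)=1$ for the generic Jacobian fiber, and then the restriction of a special K\"ahler class is the unique primitive polarization, which is principal by Lemma \ref{picard} (ii), so $d(\pi)=(1,\ldots,1)$ via Proposition \ref{specialpol}. The only difference is cosmetic---you spell out the invocation of Proposition \ref{camp} and the uniqueness argument slightly more explicitly than the paper does.
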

\begin{proof} The first statement follows immediately from Theorem \ref{picardk} and Lemma \ref{picard} (i).  
	A special Kähler class $\omega$ on $M_H(v)$ with respect to a fiber $F$ which is a Jacobian of a curve restricts to the unique primitive polarization on $F$ since $\rho(F) = 1$. Then by Lemma \ref{picard} (ii) above this polarization is principal, hence $\pol(\pi) = \pol(\omega|_F) = (1,\ldots,1)$ by Proposition \ref{specialpol}. \end{proof}

\section{Polarization type of $\kdrei$--type fibrations}\label{kdrei}

In this section we verify Conjecture \ref{conjpol} for $\kdrei$--type manifolds which follows by methods developed by E. Markman \cite{eyal2}.

\begin{theo}\label{main} Let $f : X \rightarrow \IP^n$ be a Lagrangian fibration with $X$ of $\kdrei$--type. Then the polarization type $\pol(f)$ is principal \ie it is given by $\pol(f) = (1, \ldots, 1)$. \end{theo}

For the following see also section $2.$ of \cite{eyal2}.

\begin{defin} Let $\lambda$ be an element in a lattice $\Lambda$. Then define the \emph{divisibility} of $\lambda$ as
$$ \Div(\lambda) \ \coloneqq \ \max\left\{ k \in \IN \ | \ (\lambda, \cdot)/k \text{ is an integral class in the dual } \Lambda^{\vee} \right\} \, . $$ \end{defin}

Let $\Lambda$ denote the $\kdrei$--type lattice and let $X$ be a $\kdrei$--type manifold. Fix a positive integer $d$ such that $d^2$ divides $n-1$. Next we associate to an isotropic and primitive class $\lambda \in \Lambda$ a lattice in the following way.

Let $\tilde{\Lambda}$ denote the Mukai lattice, see equation \eqref{mukailattice}. Then by \cite[Thm. 1.10]{eyal3} $X$ comes with a natural monodromy invariant $O(\tilde{\Lambda})$--orbit $\iota_X$ of primitive isometric embeddings $\iota : H^2(X,\IZ) \hookrightarrow \tilde{\Lambda}$. Choose a primitive isometric embedding $\iota : H^2(X,\IZ) \hookrightarrow \tilde{\Lambda}$ in $\iota_X$. Since $\iota(H^2(X,\IZ))$ is of rank $23$ and the Mukai lattice is of rank $24$ the orthogonal complement $\iota(H^2(X,\IZ))^{\bot}$ is of rank $1$. Choose a generator $v$ of $\iota(H^2(X,\IZ))^{\bot}$. Note that $(v,v) = 2n-2$. Then define the lattice $H(\lambda)$ to be the saturation of $\left\langle v, \iota(\lambda) \right\rangle \subset \tilde{\Lambda}$ \ie $H(\lambda)$ is the maximal sublattice of $\tilde{\Lambda}$ such that $H(\lambda)$ is of rank two and contains $\left\langle v, \iota(\lambda) \right\rangle$. 
Two pairs $(H_1, v_1)$ and $(H_2, v_2)$ are called \emph{isometric} if there is an isometry $g : H_1 \rightarrow H_2$ such that $g(v_1) = v_2$. The isometry class of $(H(\lambda),v)$ only depends on $\lambda$ since the $O(\tilde{\Lambda})$--orbit $\iota_X$ is monodromy invariant. 
Denote by $H_{n,d}$ the lattice $\IZ^2$ togeter with the pairing
$$ \frac{2n-2}{d^2} \begin{pmatrix} 1 & 0 \\ 0 & 0 \end{pmatrix} $$
and denote by $I_{n,d}$ the set of isometry classes of pairs $(H,w)$ such that $H$ is isometric to $H_{n,d}$ and $w$ is a primitive class in $H$ with $(w,w) = 2n-2$.
Given a positive integer $d$ let $I_d(X) \subset H^2(X,\IZ)$ be the subset of primitive isotropic classes $\lambda$ with respect to the Beauville--Bogomolov form such that $\Div(\lambda) = d$. Note that this set is clearly monodromy invariant \ie $\Mon^2(X) \cdot I_d(X) \subset I_d(X)$.  Note that if $X'$ is deformation equivalent to $X$ and $P : H^2(X,\IZ) \rightarrow H^2(X',\IZ)$ is a parallel transport operator then $I_d(X') = P(I_d(X))$.

\begin{lem}\cite[Lem. 2.5]{eyal2}\label{invariant} Let $\lambda$ denote a primitive isotropic class in $H^2(X,\IZ)$, let $\iota$ and $v$ as above and set $d \coloneqq \Div(\lambda)$.  \begin{enumerate}
\item The square $d^2$ divides $n-1$ and the lattice $H(\lambda)$ is isometric to $H_{n,d}$.
\item The map defined by $$ \vartheta \ : \ I_d(X) \longrightarrow I_{n,d} \, , \ \ \lambda' \longmapsto [(H(\lambda'), v)] $$ induces a bijection $I_d(X)/\Mon^2(X) \rightarrow I_{n,d}$.
\item For the pair $(H(\lambda), v)$ there exists an integer $b$ such that $(\iota(\lambda) - bv)/\Div(\lambda)$ is an integral class of $H(\lambda)$ and the isometry class $\vartheta(\lambda)$ of $(H(\lambda), v)$ is represented by $(H_{n,d}, (d,b))$ for any such integer $b$.  
\end{enumerate}
\end{lem}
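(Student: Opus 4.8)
The plan is to reduce all three statements to lattice arithmetic inside the even \emph{unimodular} Mukai lattice $\tilde{\Lambda}$, using Nikulin's formalism of discriminant forms together with E.~Markman's identification of $\Mon^2(X)$ with the group of isometries of $H^2(X,\IZ)$ that are realised by restricting $v$--preserving isometries of $\tilde{\Lambda}$. Part (1) is the core computation; (3) drops out of it, and (2) is a "global Torelli for pairs" argument built on (1).

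\emph{Parts (1) and (3).} Since $\iota(H^2(X,\IZ))$ is a primitive corank--one sublattice of the even unimodular $\tilde{\Lambda}$ with orthogonal complement $\left\langle v\right\rangle$, the lattice $\tilde{\Lambda}$ is an even overlattice of $\iota(\Lambda)\oplus\left\langle v\right\rangle$, and unimodularity forces $\tilde{\Lambda}/(\iota(\Lambda)\oplus\left\langle v\right\rangle)$ to be the graph of an anti--isometry $\gamma$ of discriminant groups $A_{\iota(\Lambda)}\cong\IZ/(2n-2)\cong A_{\left\langle v\right\rangle}$. The hypothesis $\Div(\lambda)=d$ says exactly that $\iota(\lambda)/d\in\iota(\Lambda)^\star$, and its class in $A_{\iota(\Lambda)}$ is isotropic (as $(\lambda,\lambda)=0$) of order precisely $d$. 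Pushing this class through $\gamma$ and matching the induced quadratic forms --- here one uses that $\tilde{\Lambda}$ is even, so $q(\bar v/(2n-2))\equiv 1/(2n-2)$ --- forces $(2n-2)/d^2$ to be an even integer, i.e.\ $d^2\mid n-1$, and shows that $\gamma$ sends $\overline{\iota(\lambda)/d}$ to $\overline{-bv/d}$ for some integer $b$ coprime to $d$; equivalently $u:=(\iota(\lambda)-bv)/d\in\tilde{\Lambda}$, which gives the existence statement in (3). A short divisibility argument then yields $H(\lambda)=\IZ v\oplus\IZ u$ (any class of $(\IQ v+\IQ\iota(\lambda))\cap\tilde{\Lambda}$ has $\iota(\Lambda)$--component an integer multiple of $\iota(\lambda)/d$). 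Computing the Gram form of $\left\{u,v\right\}$ --- note $(u,\iota(\lambda))=(v,\iota(\lambda))=0$, so $\iota(\lambda)=du+bv$ spans the radical of $H(\lambda)$ --- and, using $\gcd(b,d)=1$, replacing $u$ by a primitive $w_0$ with $w_0^2=(2n-2)/d^2$, one gets in the basis $\left\{w_0,\iota(\lambda)\right\}$ the Gram form $\tfrac{2n-2}{d^2}\cdot\mathrm{diag}(1,0)$, so $H(\lambda)\cong H_{n,d}$. If $b'$ also works then $(b-b')v/d\in\tilde{\Lambda}$, so $d\mid b-b'$ by primitivity of $v$; the elementary isometry $(x_1,x_2)\mapsto(x_1,cx_1+x_2)$ of $H_{n,d}$ makes $(H_{n,d},(d,b))$ depend only on $b\bmod d$; and expressing $v$ in $\left\{w_0,\iota(\lambda)\right\}$ via the change of basis from $\left\{u,v\right\}$ exhibits $h(\lambda)$ as the class of $(H_{n,d},(d,b))$, proving (3).

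\emph{Part (2).} Well--definedness of $h$ on $I_d(X)$ and its $\Mon^2(X)$--invariance are formal from naturality of the orbit $\iota_X$: changing $\iota$, or replacing $\lambda$ by $\psi(\lambda)$ for $\psi\in\Mon^2(X)$, replaces $(H(\lambda),v)$ by an isometric pair, the relevant element of $O(\tilde{\Lambda})$ fixing $v$; and $h$ lands in $I_{n,d}$ by (1). For injectivity, if $h(\lambda_1)=h(\lambda_2)$ pick an isometry $\phi\colon(H(\lambda_1),v)\to(H(\lambda_2),v)$; since the orthogonal complements $H(\lambda_i)^\bot\subset\tilde{\Lambda}$ are isometric (their class depends only on $d$) and the gluing data agree along $\phi$, a Nikulin/Eichler--type extension theorem --- applicable since $\tilde{\Lambda}$ has plenty of unimodular summands --- produces $g\in O(\tilde{\Lambda})$ restricting to $\phi$. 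Then $g(v)=v$, so $g$ restricts to an isometry of $v^\bot=\iota(\Lambda)$, which by Markman's description of the monodromy group is induced by some $\psi\in\Mon^2(X)$; as $\iota(\lambda_i)$ spans the radical of $H(\lambda_i)$ and $\lambda$ is monodromy equivalent to $-\lambda$ (realise $-\mathrm{id}$ on a rank--four summand of $\Lambda$ containing $\lambda$), one may arrange $\psi(\lambda_1)=\lambda_2$, so $[\lambda_1]=[\lambda_2]$ in $I_d(X)/\Mon^2(X)$. For surjectivity, given $[(H,w)]\in I_{n,d}$ one constructs directly a primitive isotropic $\lambda\in\Lambda$ with $\Div(\lambda)=d$ and prescribed gluing integer $b$, using the summand $\left\langle 2-2n\right\rangle$ and a copy of $U$; by (1) it satisfies $h([\lambda])=[(H,w)]$.

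The step I expect to be the main obstacle is injectivity in (2): this is where the argument leaves pure lattice theory, since it needs both the extension of isometries between (corank--one, hence unobstructed) primitive sublattices of the unimodular $\tilde{\Lambda}$ and Markman's non--trivial identification of $\Mon^2(X)$ for $\kdrei$--type $X$ with exactly the isometries of $H^2(X,\IZ)$ extending to $v$--preserving isometries of $\tilde{\Lambda}$; together they upgrade "$(H(\lambda_1),v)$ and $(H(\lambda_2),v)$ are abstractly isometric" to "$\lambda_1$ and $\lambda_2$ are monodromy equivalent". Parts (1), (3) and the surjectivity half of (2) are comparatively routine once the discriminant--form set--up is in place.
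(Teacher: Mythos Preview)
The paper does not supply its own proof of this lemma: it is quoted directly from Markman \cite[Lem.~2.5]{eyal2} and used as a black box, so there is no in--paper argument to compare your proposal against.

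For what it is worth, your outline is a faithful reconstruction of Markman's argument in the cited reference: the overlattice/discriminant--form computation for $v^\bot\hookrightarrow\tilde\Lambda$ to pin down $H(\lambda)$ and extract the glue integer $b$ (giving (i) and (iii)), and then for (ii) the combination of an Eichler--type extension of isometries in the unimodular $\tilde\Lambda$ with Markman's identification of $\Mon^2(X)$ as the isometries of $H^2(X,\IZ)$ induced by $v$--preserving isometries of $\tilde\Lambda$. Your singling out of injectivity in (ii) as the step that genuinely leaves pure lattice theory is accurate. One small point to tighten if you write this out in full: the passage from ``$q(\gamma(\overline{\iota(\lambda)/d}))\equiv 0$'' to ``$(2n-2)/d^2$ is \emph{even}'' (i.e.\ $d^2\mid n-1$ rather than merely $d^2\mid 2n-2$) needs a word about parity when $d$ is odd; in Markman's version this is handled by working explicitly with the generator of the $\left\langle 2-2n\right\rangle$ summand of $\Lambda$ rather than abstractly in the discriminant group.
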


\begin{rem}\label{bteilt} The integer $b$ in Lemma \ref{invariant} (iii) satisfies $\gcd(d,b) = 1$. Indeed, if they would have a common divisor say $r > 1$, then we could write the equation in Lemma \ref{invariant} (iii) as
$$ \iota(\lambda) \ = \ dm + bv \ = \ r(d'm + b'v) $$
for integers $m, d'$ and $b'$ which is a contradiction to the fact that $\iota(\lambda)$ is primitive. \end{rem}

The map $\vartheta : I_d(X) \rightarrow I_{n,d}$ is also called a \emph{monodromy invariant for $X$}, see \cite[5.3]{eyalprime} for the general notion.

\begin{lem}\cite[Ex. 3.1]{eyal2}\label{bm} Let $d$ be a positive integer such that $d^2$ divides $n-1$ and let $b$ an integer satisfying $\text{gcd}(d,b) = 1$. Then there exists a Beauville--Mukai system $\pi : M_H(v) \rightarrow \IP^n$ and a primitive isotropic class $\alpha \in H^2(M_H(v), \IZ)$ such that the following holds.
\begin{enumerate}
\item $\Div(\alpha) = d$,
\item the monodromy invariant $\vartheta(\alpha)$ is represented by $(H_{n,d}, (d,b))$,
\item $c_1(\pi^{\star}\ko_{\IP^n}(1)) = \alpha$. 
\end{enumerate}\end{lem}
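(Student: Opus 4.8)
The strategy is to construct the Beauville--Mukai system by hand, so that all three conditions become transparent from the explicit description. The starting point is the classification in Lemma \ref{invariant}: every monodromy class of primitive isotropic elements of a given divisibility $d$ (with $d^2 \mid n-1$) is represented by the pair $(H_{n,d}, (d,b))$ with $\gcd(d,b) = 1$, and conversely. So I need to exhibit, for each such $b$, a K$3$ surface $S$, a primitive Mukai vector $v = (0, c_1(D), s)$ with $D$ nef and $(D,D) = 2n-2$, an $v$--generic ample class $H$, and a distinguished isotropic class $\alpha$ on $M_H(v)$ realising that monodromy datum.

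First I would set up the cohomological framework. By \cite[Thm.~1.10]{eyal3} and the theory recalled before Lemma \ref{invariant}, the manifold $X = M_H(v)$ carries a natural $O(\tilde\Lambda)$--orbit of primitive embeddings $\iota : H^2(X,\IZ) \hookrightarrow \tilde\Lambda$, and in the Beauville--Mukai case Yoshioka's description identifies $\iota(H^2(X,\IZ))$ with $v^{\perp} \subset \tilde\Lambda = H^\bullet(S)$, the orthogonal complement being $\IZ v$ itself; note $(v,v) = 2n-2$ as required. The Lagrangian fibration $\pi : M_H(v) \to |D|^\star$ has $L := \pi^\star \ko_{\IP^n}(1)$ whose first Chern class $\alpha := c_1(L)$ is, under $\iota$, the class in $v^{\perp}$ determined by $c_1(D) \in H^2(S,\IZ)$ together with a correction in the $H^0 \oplus H^4$ part; this is where conditions (i) and (iii) will come from. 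Condition (iii) is then immediate by construction, and (i) amounts to computing $\Div(\alpha)$, i.e. the divisibility of $\iota(\alpha)$ inside $v^\perp$, which one reads off from the arithmetic of the rank-two sublattice $H(\alpha) = \langle v, \iota(\alpha)\rangle^{\mathrm{sat}}$.

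The core of the argument is therefore to choose the numerical data $(S, D, H, \alpha)$ so that $H(\alpha) \cong H_{n,d}$ and the image of $\iota(\alpha)$ in $H_{n,d}$ is $(d,b)$ modulo the $O(\tilde\Lambda)$--ambiguity, using part (iii) of Lemma \ref{invariant} which says it suffices to make $(\iota(\alpha) - bv)/d$ an integral class of $H(\alpha)$. Concretely I would take $S$ with Picard lattice large enough to contain a primitive nef class $D$ with $(D,D) = 2n-2$ and an auxiliary isotropic class pairing suitably with it, then pick the Mukai vector $v = (0, c_1(D), s)$ with $s$ chosen so that the divisibility of $\alpha$ inside $v^\perp$ equals the prescribed $d$ (this uses $d^2 \mid n-1$, exactly as in Lemma \ref{invariant}(i)), and then adjust $s$ further — within the same $v$--generic chamber — so that the integer $b$ appearing in $(\iota(\alpha) - bv)/d \in H(\alpha)$ is the prescribed one; the hypothesis $\gcd(d,b) = 1$ is precisely what guarantees $\iota(\alpha)$ stays primitive, by Remark \ref{bteilt}. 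Finally I would invoke Lemma \ref{invariant}(ii): since the monodromy--invariant $h$ is a bijection $I_d(X)/\Mon^2(X) \to I_{n,d}$, having matched the representative $(H_{n,d},(d,b))$ shows $h(\alpha)$ is the desired class, giving (ii).

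The main obstacle I anticipate is the bookkeeping in the middle step: ensuring simultaneously that $v$ is primitive, that an $v$--generic ample $H$ exists on the chosen $S$ (so that $M_H(v)$ is a smooth $\kdrei$--type manifold and $\pi$ is honestly a Lagrangian fibration), that $D$ is nef so that $|D| \cong \IP^n$ and the Beauville--Mukai construction applies, and that the resulting $H(\alpha)$ has Gram matrix exactly $\frac{2n-2}{d^2}\bigl(\begin{smallmatrix}1&0\\0&0\end{smallmatrix}\bigr)$ with $\iota(\alpha)$ mapping to $(d,b)$. Each constraint is satisfiable on a K$3$ with sufficiently general Picard lattice of the right signature by surjectivity of the period map, but threading all of them through one explicit choice is the delicate part; Markman's \cite[Ex.~3.1]{eyal2} carries this out, and I would follow that computation.
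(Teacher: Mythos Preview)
Your overall framework is right---construct the Beauville--Mukai system explicitly, identify $H^2(M_H(v),\IZ)$ with $v^{\perp}$ via Mukai's isometry $\Theta$, and read off the invariants inside the Mukai lattice---but two concrete misidentifications in your outline would prevent the argument from going through.

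First, the class $\iota(\alpha)$ is \emph{not} ``determined by $c_1(D)$ together with a correction in the $H^0 \oplus H^4$ part''. Under $\Theta : v^{\perp} \to H^2(M_H(v),\IZ)$, the pullback $\alpha = c_1(\pi^{\star}\ko_{\IP^n}(1))$ corresponds to the point class $(0,0,1) \in H^{\bullet}(S)$, with \emph{zero} $H^2$--component. This is a genuine computation using the explicit formula for $\Theta$ and the observation that $\{C \in |D| : p \in C\}$ is a hyperplane in $|D|$; it is not visible from general principles.

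Second, and more seriously, taking $D$ primitive with $(D,D) = 2n-2$ forces $\Div(\alpha) = 1$ regardless of $s$. Indeed, with $v = (0,D,s)$ and $D$ primitive in the unimodular K$3$ lattice, for every integer $r$ there exists $c$ with $(D,c) = rs$, so $(r,c,0) \in v^{\perp}$ pairs with $(0,0,1)$ to give $r$; hence the divisibility is $1$. The paper instead takes a primitive $\beta$ with $(\beta,\beta) = (2n-2)/d^2$ (this is exactly where the hypothesis $d^2 \mid n-1$ is used) and sets $v = (0, d\beta, s)$ with $s$ chosen so that $sb \equiv 1 \pmod d$; in particular $\gcd(d,s) = 1$ and $v$ is primitive. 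Then $(r,c,t) \in v^{\perp}$ forces $rs = d(\beta,c)$, hence $d \mid r$, giving $\Div((0,0,1)) = d$. Moreover $\iota(\alpha) - bv = (0, -bd\beta, 1 - bs)$ is divisible by $d$ by the congruence on $s$, so Lemma \ref{invariant}(iii) yields (ii) directly. Note also that your plan to ``adjust $s$ further within the same $v$--generic chamber'' does not make sense: $s$ is a coordinate of $v$ itself, so varying it changes the Mukai vector and hence the moduli space; $s$ must be fixed once at the outset by the congruence $sb \equiv 1 \pmod d$.
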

\begin{proof} This is a more detailed and complete version of \cite[Ex. 3.1]{eyal2}. Let $S$ be a projective K$3$ surface together with a nef and primitive line bundle $L$ on $S$ of Bogomolov degree $(2n-2)/d^2$ e.g. take a $(2n-2)/d^2$--polarized K$3$ surface.  Set $\beta \coloneqq c_1(L)$ and let $s$ be an integer such that $sb \equiv 1 \mod d$. Then $v \coloneqq (0, d\beta, s)$ is a Mukai vector. In particular $v$ is primitive since $\beta$ is primitive and $\gcd(d,s) = 1$. Choose an $v$--generic ample class $H$. We have $(v,v) = d^2 (\beta,\beta) = 2n-2$ hence $M_H(v)$ is irreducible holomorphic symplectic of dimension $2n$ and we obtain a Beauville--Mukai system $\pi : M_H(v) \rightarrow |L^d|^{\star}$ as described in section \ref{beauvillemukai}. We have Mukai's Hodge isometry
$$\Theta \ : \ v^{\bot} \longrightarrow H^2(M_H(v), \IZ) $$
which can be defined as follows. Choose a quasi--universal family of sheaves $\ke$ on $S$ of similitude $\rho \in \IN$, cf. \cite[Thm. A.5]{mukaibundle}. That is $\ke \in \coh(S \times M_H(v))$ such that $\ke$ is flat over $M_H(v)$ and for every class $\kf \in M_H(v)$ one has $\ke|_{S \times \left\{\kf\right\}} \cong \kf^{\oplus \rho}$. Then set 
$$\Theta(x) \ \coloneqq \ \frac{1}{\rho} \left[(\pr_{M_H(v)})_{!}\left(( \ch(\ke) (\pr_{S})^{\star}(\sqrt{\td(S)} x^{\vee})\right) \right]_2 $$
where $x^{\vee} = -x_0 + x_2 + x_4$ for $x = x_0 + x_2 + x_4$ and $[ \cdot ]_2$ denotes the part in $H^2(S \times M_H(v),\IZ)$. For the details see \cite[1.2]{yoshi}. 

Set $\alpha \coloneqq \Theta(0,0,1)$ which is clearly isotropic and define $\iota : H^2(M_H(v),\IZ) \rightarrow H^{\bullet}(S,\IZ)$ to be $\Theta^{-1}$ composed with the inclusion $v^{\bot} \hookrightarrow H^{\bullet}(S,\IZ)$.
\begin{enumerate}
\item An element $(r,c,t)$ belongs to $v^{\bot}$ iff $$0 \  = \  ((0,d\beta,s), (r,c,t)) \ = \ d(\beta,c) - rs  \ \Longleftrightarrow \ rs \ = \  d(\beta,c) \, .$$
Hence $d$ divides $r$ since $\gcd(d,s) = 1$. Further we have $((0,0,1),(r,c,t)) = r$ for all $(r,c,t) \in v^{\bot}$ hence $\Div((0,0,1)) \geq d$. As the K$3$--lattice is unimodular we have $\Div_{H^2(S,\IZ)}(\beta) = 1$ in $H^2(S,\IZ)$. This implies that $\Div(\beta) = 1$ in $v^{\bot}$ hence we can find an element $c \in H^2(S,\IZ)$ such that $s = (c, \beta)$. Then $(d, c, 0)$ is contained in $v^{\bot}$ and $((0,0,1),(d,c,0)) = d$ hence 
$$\Div(\alpha) \ = \ \Div(0,0,1) \ = \ d \, .$$
\item We have $\iota(\alpha) - bv = (0,0,1) - (0,bd\beta, bs) = (0, bd\beta, 1 -bs)$ which is divisible by $d$ since $sb \equiv 1 \mod d$ by assumption. By Lemma \ref{invariant} (iii) the monodromy invariant $\vartheta(\alpha)$ is represented by $(H_{n,d}, (d,b))$.

\item Let $\omega = [p] \in H^4(S,\IZ)$ denote Poincare dual of a point $p \in S$. By our notation we have $\omega = (0,0,1) = \omega^{\vee} \in H^{\bullet}(S)$. Since $S$ is a K$3$ surface one has $\sqrt{\td(S)} = 1 + \omega$ hence $\sqrt{\td(S)} \omega = \omega$. Note that $\ke$ is a sheaf of rank zero hence $\ch(\ke) = \rho c_1(\ke) + \xi = \rho [D] + \xi$ for some divisor $D$ in $S \times M_H(v)$ and for some terms $\xi$ of higher degree. Further $(\pr_{S})^{\star} \omega = [p \times M_H(v)] \in H^4(S \times M_H(v), \IZ)$ and $[(\pr_{M_H(v)})_{!}(\xi \cdot [p \times M_H(v)])]_2 = 0$ due to degree reasons. Then we have 
\begin{align*} \Theta(0,0,1) \ & = \ (\pr_{M_H(v)})_{!}\left(D \cdot [p \times M_H(v)] \right) \\ \ & = \ [\kf \in M_H(v) \ | \ p \in \supp(\kf)] \\
\ & = \ \pi^{\star}[C \in |L^d| \ | \ p \in C] \\
\ & = \ \pi^{\star}c_1(\ko_{|L^d|}(1)) \ = \ c_1( \pi^{\star} \ko_{|L^d|}(1)) \end{align*} 
since $V \coloneqq \left\{ C \in |L^d| \ | \ p \in C \right\}$ is a hyperplane in a projective space, hence $[V] = c_1(\ko_{|L^d|}(1))$.  \end{enumerate} \end{proof}

For a fixed $\kdrei$--type manifold $X$ we have defined the monodromy invariant $\vartheta : I_d(X) \rightarrow I_{n,d}$. If $X'$ is another $\kdrei$--type manifold then we denote the monodromy invariant also by $\vartheta : I_d(X') \rightarrow I_{n,d}$.   

\begin{lem}\cite[Lem. 5.17]{eyalprime}\label{markcrit} Let $X_i$, $i = 1,2$, be two $\kdrei$--type manifolds and $e_i \in I_d(X_i)$. Assume that $\vartheta(e_1) = \vartheta(e_2)$, $e_i = c_1(L_i)$ for holomorphic line bundles $L_i$ and that there are Kähler classes $\kappa_i$ such that $(\kappa_i, e_i) > 0$. Then the pairs $(X_i, L_i)$ are deformation equivalent in the sense of Definition \ref{pairdef}. \end{lem}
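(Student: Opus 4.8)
The plan is to reduce the statement to the construction of a single parallel transport operator $Q \colon H^2(X_1,\IZ) \to H^2(X_2,\IZ)$ with $Q(e_1) = e_2$, and then to build the interpolating family and flat section demanded by Definition \ref{pairdef} out of a chain of local deformation charts along a path realizing $Q$. The hypotheses split naturally: the equality $h(e_1) = h(e_2)$ is a lattice-theoretic input that produces $Q$, while the K\"ahler positivity conditions $(\kappa_i, e_i) > 0$ are orientation data that make the geometric construction consistent with the chosen component of the moduli of marked pairs.

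First I would use that $X_1$ and $X_2$ are both of $\kdrei$-type, hence deformation equivalent, so there exists a parallel transport operator $P \colon H^2(X_1,\IZ) \to H^2(X_2,\IZ)$. Since the $O(\tilde\Lambda)$-orbit $\iota_X$ of embeddings into the Mukai lattice is natural, i.e.\ deformation invariant, by \cite{eyal3}, the monodromy--invariant is preserved under parallel transport, so $h(P(e_1)) = h(e_1) = h(e_2)$ in $I_{n,d}$. Now $P(e_1)$ and $e_2$ both lie in $I_d(X_2)$ and have the same image under $h$; by the bijectivity in Lemma \ref{invariant}(ii), the map $h$ separates $\Mon^2(X_2)$-orbits, so $P(e_1)$ and $e_2$ lie in one $\Mon^2(X_2)$-orbit. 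Choosing $g \in \Mon^2(X_2)$ with $g(P(e_1)) = e_2$ and setting $Q := g \circ P$ gives a parallel transport operator (a composition of parallel transport operators) with $Q(e_1) = e_2$.

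Next I would fix orientations and place both pairs in a common component. The classes $e_i$ are isotropic, hence lie in the boundary of the positive cone, and the assumption $(\kappa_i, e_i) > 0$ with $\kappa_i$ K\"ahler selects the boundary of the distinguished component $\kc^+_{X_i}$ containing the K\"ahler cone. I would choose a marking $\eta_2$ with $(X_2,\eta_2)$ in a component $\gM^0_\Lambda$ compatible with the orientation of $\tilde{\kc}_\Lambda$ and set $\eta_1 := \eta_2 \circ Q$. Because $Q = \eta_2^{-1} \circ \eta_1$ is a parallel transport operator, the marked pairs $(X_1,\eta_1)$ and $(X_2,\eta_2)$ lie in the same connected component $\gM^0_\Lambda$, and $\eta_1(e_1) = \eta_2(Q(e_1)) = \eta_2(e_2) =: \lambda$. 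Since parallel transport preserves the orientation of the positive cone, compatibility with the orientation of $\tilde{\kc}_\Lambda$ holds for $\eta_1$ as well, so $\lambda \in \partial \kc^+_{p_i}$ for both periods $p_i = \kp(X_i,\eta_i)$ by the positivity conditions; thus both pairs in fact sit in the common locus $\gM^0_{\lambda^\bot}$. Finally I would realize the deformation equivalence of pairs as in the gluing argument of Proposition \ref{defo} (iv)$\Rightarrow$(i): cover a path in $\gM^0_{\lambda^\bot}$ joining $(X_1,\eta_1)$ and $(X_2,\eta_2)$ by finitely many simply connected deformation charts, on each of which the marking trivializes $R^2\pi_\star\IZ$ and the constant class $\lambda$ pulls back to a flat section; gluing these charts produces a connected complex space $S$ with finitely many irreducible components, a family $\pi \colon \kX \to S$, points $t_i$ with $\kX_{t_i} = X_i$, and a global section $e$ of $R^2\pi_\star\IZ$ with $e_{t_i} = \eta_i^{-1}(\lambda) = c_1(L_i)$. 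This is exactly the data of Definition \ref{pairdef}.

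I expect the main obstacle to be the orientation bookkeeping in the third paragraph, namely arranging that the single operator $Q$ simultaneously satisfies $Q(e_1) = e_2$ and carries $\kc^+_{X_1}$ to $\kc^+_{X_2}$ consistently, so that both marked pairs genuinely lie in one component $\gM^0_{\lambda^\bot}$ rather than in components differing by the sign of $\lambda$. Controlling this requires the fine structure of $\Mon^2(X_2)$ from Markman's work, and it is precisely at this point that the K\"ahler positivity hypotheses $(\kappa_i,\kappa_i)$, more exactly $(\kappa_i, e_i) > 0$, are indispensable, since the lattice-theoretic orbit computation alone does not distinguish $e_2$ from $-e_2$.
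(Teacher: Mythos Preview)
The paper does not give its own proof of this lemma: it is stated with the citation \cite[Lem.~5.17]{eyal1} and used as a black box in the proof of Theorem~\ref{main}. So there is no argument in the paper to compare your proposal against.

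That said, your outline is essentially the standard proof and is correct. The two substantive steps are exactly the ones you isolate: first, the bijection of Lemma~\ref{invariant}(ii) together with the deformation invariance of the orbit $\iota_X$ converts $h(e_1)=h(e_2)$ into a parallel transport operator $Q$ with $Q(e_1)=e_2$; second, a path in a connected component of the moduli of marked pairs, covered by Kuranishi charts, yields the family $\pi\colon\kX\to S$ and the flat section $e$ required by Definition~\ref{pairdef}. One simplification you could make: Definition~\ref{pairdef} does not require the section $e$ to be of Hodge type $(1,1)$ at intermediate points, so you may run the gluing directly in $\gM^0_\Lambda$ rather than in $\gM^0_{\lambda^\bot}$. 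Once $Q=\eta_2^{-1}\circ\eta_1$ is a parallel transport operator, $(X_1,\eta_1)$ and $(X_2,\eta_2)$ already lie in the same $\gM^0_\Lambda$, and the constant class $\lambda$ pulls back to the desired flat section along any path there; the orientation bookkeeping you flag as the main obstacle then largely evaporates for the purposes of this particular lemma. The positivity hypotheses $(\kappa_i,e_i)>0$ are present because Markman's Lemma~5.17 is formulated in a broader setting (arbitrary faithful monodromy invariants, arbitrary deformation types), where they genuinely pin down the correct sign; in the specific $\kdrei$--type situation with the invariant $h$ of Lemma~\ref{invariant} they are not the crux.
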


Note that the Lemma above is stated in \cite{eyalprime} in a much more general setting, for monodromy invariants for irreducible holomorphic symplectic manifolds of arbitrary deformation type.

\begin{lem}\label{isopos} Let $\lambda$ be a nontrivial isotropic class in the closure $\bar{\kc}^{+}_X$ of the positive cone in $H^{1,1}(X,\IR)$ with $X$ an arbitrary irreducible holomorphic symplectic manifold. Then the Beauville--Bogomolov quadratic form satisfies $(x, \lambda) > 0$ for every class $x$ in the positive cone $\kc^{+}_X$.\end{lem}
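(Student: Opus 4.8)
The claim is a formal consequence of two facts: that the Beauville--Bogomolov form is \emph{Lorentzian} on $H^{1,1}(X,\IR)$, i.e.\ has signature $(1, h^{1,1}(X)-1)$, together with an elementary property of the positive cone of a Lorentzian form. For the first point, recall that the Hodge decomposition $H^2(X,\IR) = H^{1,1}(X,\IR) \oplus P$, where $P$ is the real $2$--plane spanned by the real and imaginary parts of the holomorphic symplectic form $\sigma$, is orthogonal for $q_X$ (because $q_{X,\CC}(H^{p,q}, H^{p',q'}) = 0$ unless $(p+p',q+q') = (2,2)$), and $q_X|_P$ is positive definite since $q_X(\sigma) = 0$ and $q_X(\sigma,\bar\sigma) > 0$. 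As $q_X$ has signature $(3, b_2(X)-3)$ by \cite[23.3]{huy1}, it follows that $q_X|_{H^{1,1}(X,\IR)}$ has signature $(1, h^{1,1}(X)-1)$; this is also why $\{m \in H^{1,1}(X,\IR) \mid (m,m) > 0\}$ has exactly two connected components, one of which is $\kc^+_X$, the one meeting $\kk_X$, cf.\ Lemma \ref{orientation}.

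The plan is then to reduce to the following elementary statement and apply it: \emph{let $(V,q)$ be a real quadratic space of signature $(1,k)$ with associated bilinear form $(\cdot,\cdot)$, and let $\mathcal{C}$ be one of the two connected components of $\{v \in V \mid q(v) > 0\}$; then $(x,y) > 0$ for every $x \in \mathcal{C}$ and every $y \in \bar{\mathcal{C}} \setminus \{0\}$.} We apply this with $V = H^{1,1}(X,\IR)$, $q = q_X|_{H^{1,1}(X,\IR)}$, $\mathcal{C} = \kc^+_X$ and $y = \lambda$, a nonzero isotropic class lying in $\bar{\kc}^+_X$; note that $\lambda \in \bar{\kc}^+_X$ does hold in all our applications, where $\lambda$ is the class of a nef line bundle and hence lies in $\bar{\kk}_X \subseteq \bar{\kc}^+_X$.

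To prove the elementary statement, fix $x \in \mathcal{C}$ and, after rescaling, assume $q(x) = 1$. Then $V = \IR x \oplus x^{\perp}$ orthogonally, and $q|_{x^{\perp}}$ is negative definite by the signature hypothesis, so $\|w\| := \sqrt{-q(w)}$ is a norm on $x^{\perp}$. Writing $y = tx + w$ with $t = (x,y)$ and $w \in x^{\perp}$ gives $q(y) = t^2 - \|w\|^2$, hence $\{q > 0\}$ is the disjoint union of the nonempty convex open cones $\{t > \|w\|\}$ and $\{t < -\|w\|\}$, which are therefore its connected components. Since $x$ corresponds to $t = 1 > 0 = \|0\|$, we get $\mathcal{C} = \{t > \|w\|\}$ and $\bar{\mathcal{C}} = \{t \geq \|w\|\}$. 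If now $y \in \bar{\mathcal{C}} \setminus \{0\}$, then $t \geq \|w\| \geq 0$; were $t = 0$ we would get $\|w\| \leq 0$, forcing $w = 0$ and $y = 0$, a contradiction, so $(x,y) = t > 0$. As $x \in \mathcal{C}$ was arbitrary, this is exactly the required inequality for $y = \lambda$.

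There is no serious obstacle here: once the signature is pinned down, the argument is pure linear algebra. The only point that needs care is the sign normalization of $\lambda$: an arbitrary nonzero isotropic class in $H^{1,1}(X,\IR)$ lies in the closure of one of the two components of the positive cone, but not necessarily in $\bar{\kc}^+_X$ (replacing $\lambda$ by $-\lambda$ reverses the sign of $(x,\lambda)$), so the lemma is to be understood with $\lambda$ normalized into $\bar{\kc}^+_X$ --- which, as noted above, is automatic whenever $\lambda$ is nef.
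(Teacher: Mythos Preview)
Your proof is correct and follows essentially the same route as the paper: both use that $q_X$ has Lorentzian signature on $H^{1,1}(X,\IR)$, decompose $\lambda$ orthogonally along $\langle x\rangle \oplus x^\perp$ with $x^\perp$ negative definite to exclude $(x,\lambda)=0$, and then use $\lambda \in \bar{\kc}^+_X$ to fix the sign. Your explicit remark that the statement tacitly requires the normalization $\lambda \in \bar{\kc}^+_X$ (automatic in the applications, where $\lambda$ is nef) is a useful clarification that the paper's proof leaves implicit.
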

\begin{proof} As $\kc^{+}_X$ is self--dual the cone coincides with its dual \ie $\kc^{+}_X = (\kc^{+}_X)^{\vee}$. This means $(x,y) > 0$ for all $y \in \kc^{+}_X$. Taking the closure of the positive cone this condition changes to $(x,y) \geq 0$, in particular $(x,\lambda) \geq 0$. 	
	As $(x,x) > 0$ and the signature of the Beauville--Bogomolov form on $H^{1,1}(X,\IR)$ is $(1, b_2(X) - 3)$ (\cite[Cor. 23.11]{huy1}) the orthogonal complement $x^{\bot}$ in $H^{1,1}(X,\IR)$ is a negative definite subspace. Therefore $\lambda \notin x^{\bot}$, otherwise $(\lambda,\lambda) < 0$. We conclude $(x,\lambda) > 0$.
	
\end{proof}

\begin{proof}[Proof of Theorem \ref{main}] Let $f : X \rightarrow \IP^n$ a $\kdrei$--type fibration and set $L \coloneqq f^{\star} \ko_{\IP^n}(1)$. Then $\lambda \coloneqq c_1(L)$ is primitive and isotropic with respect to the Beauville--Bogomolov quadratic form by Lemma \ref{liso}. Let $d \coloneqq \Div(\lambda)$ denote the divisibility of $\lambda$. Consider the monodromy invariant $\vartheta : I_d(X) \rightarrow I_{n,d}$. By Lemma \ref{invariant} (iii) there exists an integer $b$ such that $\vartheta(\lambda)$ is also represented by $(H_{n,d}, (d,b))$. 

Then $\gcd(b,d) = 1$ by Remark \ref{bteilt} hence Lemma \ref{bm} gives a Beauville--Mukai system $\pi : X' \rightarrow \IP^n$ together with a primitive isotropic class $\alpha \in H^2(X', \IZ)$ such that $\Div(\alpha) = d$, $L' \coloneqq \pi^{\star} \ko_{\IP^n}(1)$ satisfies $c_1(L') = \alpha$ and $\vartheta(\alpha)$ is represented also by $(H_{n,d}, (d,b))$ \ie $\vartheta(\alpha) = \vartheta(\lambda)$.

Further by Lemma \ref{isopos} we have $(\omega, L) > 0$ and $(\omega', L') > 0$ for Kähler classes $\omega$ on $X$ and $\omega'$ on $X'$ as $L$ and $L'$ are isotropic and nef, therefore are contained in $\bar{\kk}_X \subset \bar{\kc}^{+}_X$ and $\bar{\kk}_{X'} \subset \bar{\kc}^{+}_{X'}$ respectively. Hence we can apply Lemma \ref{markcrit} to see that the pairs $(X, L)$ and $(X', L')$ are deformation equivalent in the sense of Definition \ref{pairdef}. By Proposition \ref{defo} there exist markings $\eta$ and $\eta'$ on $X$ and $X'$ respectively such that the pairs $(X,\eta)$ and $(X', \eta')$ are contained in the same connected component of the moduli of Lagrangian fibrations $\gU^{\circ}_{\lambda'}$ for a primitive isotropic class $\lambda' := \eta(c_1(L)) \in \Lambda$. By Theorem \ref{same} and Corollary \ref{prin} we have
$$ \pol(f) \ = \ \pol(\pi) \ = \ (1, \ldots ,1) $$ 
which concludes the proof. \end{proof}

\bibliographystyle{alpha}

\begin{thebibliography}{CvdG92}
	
	\bibitem[AK80]{compac}
	Allen Altman and Steven Kleiman.
	\newblock {Compactifying the Picard scheme}.
	\newblock {\em Adv. in Math.}, 35:50--112, 1980.
	
	\bibitem[Bea84]{Bea84}
	Arnaud Beauville.
	\newblock Vari\'{e}t\'{e}s k{\"a}hleriennes dont la premi\'{e}re classe de
	chern est nulle.
	\newblock {\em J. Differential Geom.}, 18:755--782, 1984.
	
	\bibitem[BL03]{BL}
	Christina Birkenhake and Herbert Lange.
	\newblock {\em Complex Abelian Varieties}, volume 302 Second Edition of {\em
		Grundlehren der mathematischen Wissenschaft}.
	\newblock Springer, 2003.
	
	\bibitem[Cam06]{camp05}
	Fr\'{e}d\'{e}ric Campana.
	\newblock Isotrivialit\'{e} de certaines familles k{\"a}hl\'{e}riennes de
	vari\'{e}t\'{e}s non projectives.
	\newblock {\em Mathematische Zeitschrift}, pages 147--156, 2006.
	
	\bibitem[CvdG92]{cili}
	Ciro Ciliberto and Gerard van~der Geer.
	\newblock {On the Jacobian of a hyperplane section of a surface}.
	\newblock {\em Classification of irregular varieties (Trento, 1990) Lecture
		Notes in Math., Springer, Berlin}, 1515:33--40, 1992.
	
	\bibitem[D'S79]{souza}
	Cyril D'Souza.
	\newblock {Compactification of generalized Jacobians}.
	\newblock {\em Proc. Indian Acad. Sci.}, 88:419--457, 1979.
	
	\bibitem[GHJ03]{huy1}
	Mark Gross, Daniel Huybrechts, and Dominic Joyce.
	\newblock {\em Calabi--Yau Manifolds and Related Geometries}.
	\newblock Springer, 2003.
	
	\bibitem[GL14]{LeGr13}
	Daniel Greb and Christian Lehn.
	\newblock Base manifolds for lagrangian fibrations on hyperk{\"a}hler
	manifolds.
	\newblock {\em Int. Math. Res. Notices}, 19:5483--5487, 2014.
	
	\bibitem[HL10]{huylehn}
	Daniel Huybrechts and Manfred Lehn.
	\newblock {\em The geometry of moduli spaces of sheaves}.
	\newblock Cambridge University Press, Cambridge, second edition, 2010.
	
	\bibitem[HO09]{HO09}
	Jun-Muk Hwang and Keiji Oguiso.
	\newblock Characteristic foliation on the discriminantal hypersurface of a
	holomorphic {L}agrangian fibration.
	\newblock {\em Amer. Journal of Math.}, 131:981--1007, 2009.
	
	\bibitem[Huy03]{huycone}
	Daniel Huybrechts.
	\newblock The {K}{\"a}hler cone of a compact hyperk{\"a}hler manifold.
	\newblock {\em Math. Ann}, pages 499--513, 2003.
	
	\bibitem[Hwa08]{Hwa08}
	Jun-Muk Hwang.
	\newblock Base manifolds for fibrations of projective irreducible symplectic
	manifolds.
	\newblock {\em Invent. Math. 174}, 3:625--644, 2008.
	
	\bibitem[Laz04]{larza}
	Robert Lazarsfeld.
	\newblock {\em {Positivity in Algebraic Geometry}}, volume I \& II of {\em
		{Ergebnisse der Mathematik, volumes 48 and 49}}.
	\newblock Springer, 2004.
	
	\bibitem[Mar10]{eyal3}
	Eyal Markman.
	\newblock Integral constraints on the monodromy group of the hyperk{\"a}hler
	resolution of a symmetric product of a {K}3 surface.
	\newblock {\em Internat. J. of Math. 21}, 21:169--223, 2010.
	
	\bibitem[Mar11]{eyal1}
	Eyal Markman.
	\newblock {A survey of Torelli and monodromy results for holomorphic-symplectic
		varieties}.
	\newblock In Wolfgang~Ebeling et. al., editor, {\em Complex and Differential
		Geometry}, volume~8, pages 257--323. Springer Proceedings in Math., 2011.
	
	\bibitem[Mar13]{eyalprime}
	Eyal Markman.
	\newblock Prime exceptional divisors on holomorphic symplectic varieties and
	monodromy reflections.
	\newblock {\em Kyoto J. Math.}, 53:345--403, No. 2, 2013.
	
	\bibitem[Mar14]{eyal2}
	Eyal Markman.
	\newblock {Lagrangian fibrations of holomorphic-symplectic varieties of
		K$3^{[n]}$-type}.
	\newblock In Anne Fr{\"u}hbis-Kr{\"u}ger et. al., editor, {\em Algebraic and
		Complex Geometry}, volume~71. Springer Proceedings in Math., 2014.
	
	\bibitem[Mat99]{Mat99}
	Daisuke Matsushita.
	\newblock On fibre space structures of a projective irreducible symplectic
	manifold.
	\newblock {\em Topology}, pages 38(1):79--83, 1999.
	
	\bibitem[Mat00]{Mat00}
	Daisuke Matsushita.
	\newblock Equidimensionality of {L}agrangian fibrations on holomorphic
	symplectic manifolds.
	\newblock {\em Math. Res. Lett.}, 7:389--391, 2000.
	
	\bibitem[Mat01]{Mat01}
	Daisuke Matsushita.
	\newblock Addendum to: On fibre space structures of a projective irreducible
	symplectic manifold.
	\newblock {\em Topology}, pages 38(1):79--83, 2001.
	
	\bibitem[Mat03]{Mat03}
	Daisuke Matsushita.
	\newblock Holomorphic symplectic manifolds and lagrangian fibrations.
	\newblock {\em Acta Appl. Math}, pages 75(1--3):117--123, 2003.
	
	\bibitem[Mat09]{Mat09}
	Daisuke Matsushita.
	\newblock On deformation of deformations of {L}agrangian fibrations.
	\newblock arXiv:0903.2098, 2009.
	
	\bibitem[Mat13]{matiso}
	Daisuke Matsushita.
	\newblock {On isotropic divisors on irreducible symplectic manifolds}.
	\newblock arXiv:1310.0896, 2013.
	
	\bibitem[Muk84]{mukais}
	Shigeru Mukai.
	\newblock {Symplectic structure of the moduli space of sheaves on an abelian or
		K3 surface}.
	\newblock {\em Invent. math}, 77:101--116, 1984.
	
	\bibitem[Muk87]{mukaibundle}
	Shigeru Mukai.
	\newblock {On the moduli space of bundles on K3 surfaces I}.
	\newblock {\em Tata Institute for fundamental research studies in mathematics},
	11:341--413, 1987.
	
	\bibitem[MY15]{eyalyoshi}
	Eyal Markman and Kota Yoshiokai.
	\newblock {A Proof of the Kawamata-Morrison Cone Conjecture for Holomorphic
		Symplectic Varieties of K$3^{[n]}$ or Generalized Kummer Deformation Type}.
	\newblock {\em Int Math Res Notices}, 24:13563--13574, 2015.
	
	\bibitem[Rie14]{ulrike1}
	Ulrike Riess.
	\newblock {On the Beauville conjecture}.
	\newblock arXiv:1409.3484v2, 2014.
	
	\bibitem[Saw03]{sawonab}
	Justin Sawon.
	\newblock Abelian fibred holomorphic symplectic manifolds.
	\newblock {\em Turkish Journal of Mathematics}, 27:197--230, 2003.
	
	\bibitem[Ver13]{Verb11}
	Misha Verbitsky.
	\newblock {Mapping class group and a global Torelli theorem for hyperk{\"a}hler
		manifolds}.
	\newblock {\em Duke Math. J.}, 162(15):2929--2986, 2013.
	
	\bibitem[Voi92]{Voi92}
	Claire Voisin.
	\newblock Sur la stabilit\'{e} des sous-vari\'{e}t\'{e}s lagrangiennes des
	vari\'{e}t\'{e}s symplectique holomorphes.
	\newblock {\em Complex projective geometry Cambridge Univ. Press},
	159:294--303, 1992.
	
	\bibitem[Voi02]{voih1}
	Claire Voisin.
	\newblock {\em Hodge Theory and Complex Algebraic Geometry I}, volume~76 of
	{\em Cambridge studies in advanced mathematics}.
	\newblock Cambridge University Press, 2002.
	
	\bibitem[Yos01]{yoshi}
	Kota Yoshioka.
	\newblock Moduli spaces of stable sheaves on abelian surfaces.
	\newblock {\em Math. Ann. 321}, 4:817--884, 2001.
	
	\bibitem[Yos12]{yoshibase}
	Kota Yoshioka.
	\newblock Bridgeland's stability and the positive cone of the moduli spaces of
	stable objects on an abelian surface.
	\newblock arXiv:1206.4838v2, 2012.
	
\end{thebibliography}

\end{document}